\documentclass[a4paper,12pt]{amsart}
 
\usepackage{amssymb, amsmath, amsthm, amsfonts}
 
\usepackage{hyperref}

\usepackage{booktabs}
\theoremstyle{plain}
\newtheorem{theorem}{Theorem}[section]

\newtheorem{proposition}[theorem]{Proposition}

\theoremstyle{definition}
\newtheorem{definition}[theorem]{Definition}
\newtheorem{remark}[theorem]{Remark}

\def\r{\mathbb R}
 
 \def\nil{\text{Nil}_3}
\usepackage{xcolor,graphicx}

\begin{document}
\title[Solitons in the Heisenberg space $\nil$]{Classification of invariant Gauss curvature   solitons in the Heisenberg space $\nil$}
\author{Rafael Belli}
\address{Department of Mathematics. Federal University of São Carlos. 13565-905 São Carlos, Brazil}
\email{rafaelbelli@estudante.ufscar.br}
\author{Rafael L\'opez}
\address{Department of Geometry and Topology. University of Granada. 18071 Granada, Spain}
\email{rcamino@ugr.es}
\subjclass[2020]{53C44, 53C42, 53C30}
\keywords{Gauss curvature flow, solitons, Heisenberg group, invariant surfaces, extrinsic Gauss curvature.}
\maketitle

\begin{abstract}
In this paper, we classify all solitons of the Gauss curvature flow in the three-dimensional Heisenberg group $\mathrm{Nil}_3$ that are
 invariant under a one-parameter group of ambient isometries. By means of   the four canonical types of Killing vector 
 fields and   the three   families of invariant surfaces (vertical translations, horizontal translations, 
 and helicoidal motions), we analyze the twelve resulting types of possible solitons.   In some cases, there do not exist any invariant solitons; in others, we find explicit parametrizations, or   describe their geometric properties.
 
\end{abstract}

\section{Introduction and statement of the main results}

 The Gauss curvature flow (GCF) is a classical extrinsic geometric flow where a surface evolves along its unit normal vector field with a velocity proportional to its  Gauss curvature. While traditionally studied in the Euclidean setting, this flow can be naturally extended to non-Euclidean ambient spaces, such as the Heisenberg group $\nil$. To be precise, let $\psi:\Sigma\to\nil$ be a smooth immersion of a surface $\Sigma$ into the Heisenberg group $\nil$. We say that $\psi$ evolves by the Gauss curvature flow if there exists a smooth map $\Psi:(-\epsilon,\epsilon)\times \Sigma \to \nil$ such that, for every $t \in (-\epsilon,\epsilon)$, the map $\Psi(t,\cdot):\Sigma \to \nil$ is a smooth immersion satisfying the parabolic system
\begin{equation}\label{eq0}
\left\{ 
\begin{array}{l}
\frac{\partial \Psi}{\partial t}(t,\cdot) = -K(t,\cdot) N(t,\cdot),
\\ 
\Psi(0,\cdot)=\psi,
\end{array}%
\right.
\end{equation}
where $K(t,\cdot)$ and $N(t,\cdot)$ denote, respectively, the extrinsic Gauss curvature and the unit normal vector field associated with the immersion $\Psi(t,\cdot)$.

A particularly interesting class of solutions to the GCF consists of the self-similar solutions, commonly referred to as \emph{solitons}. The  shapes of these surfaces remain invariant under the flow, meaning they evolve purely by ambient isometries of $\nil$. Formally, let $\{\Phi_t\}_{t\in\mathbb{R}}$ be a one-parameter family of isometries of $\nil$ with $\Phi_0=\mathrm{Id}$. A smooth immersion $\psi:\Sigma\to\nil$ is called a GCF \emph{soliton} associated with $\{\Phi_t\}_{t\in\mathbb{R}}$ if  
\begin{equation}\label{eq00}
\Psi(t,p)=\Phi_t(\psi(p)), \qquad t\in \r, p\in\Sigma,
\end{equation}
is a solution of the GCF equation \eqref{eq0}. If $F$ is the Killing vector field in $\nil$ generated by $\{\Phi_t\}_{t\in\mathbb{R}}$, we can decompose it into its tangential and normal components along the surface. Since the tangential part merely induces time-dependent internal reparametrizations without altering the geometric image of the evolving profile, a standard argument allows us to rule out these tangential shifts and characterize the solitons \eqref{eq00} purely in terms of a stationary equation involving $K$ and $N$ \cite{ck,dt}. We adopt this geometric characterization as our working definition of a GCF soliton.

\begin{definition}
Let $F\in\mathcal{X}(\nil)$ be a Killing vector field. A surface   $\Sigma$ immersed in $\nil$ is said to be an {\it $F$-soliton} of the GCF   if 
\begin{equation}\label{eq1}
K=-\langle N,F\rangle,
\end{equation}
where $K$ and $N$ are the extrinsic Gauss curvature and the unit normal vector field of $\Sigma$, respectively.
\end{definition}

The Gauss curvature flow in Euclidean space has its starting point in the study by   Firey to model the shape of tumbling stones \cite{firey}. The  literature on the  GCF in Euclidean space is vast, with initial pioneering works including   \cite{an,ch,ts,ur}, without to be a complete list.  However, the extension of   the  GCF   to other  Riemannian 
manifolds  has been limited \cite{andrews94,chu,delima,limapipoli,wy,wyz}. Recently, the authors of the present paper have classified the GCF solitons in the solvable group \cite{bl1}, in the product space $\mathbb{H}^2\times\r$ \cite{bl2} and hyperbolic space \cite{lopez}. In contrast, research   on the mean curvature flow in   homogenous spaces has been much more active, including the eight geometries of Thurston: \cite{bu1,bu2,bu3,ch2,lima,lira,lm5,mm,pi,pi2}.

In this paper, we classify  the GCF solitons in the Heisenberg group $\nil$. Depending on the choice of the Killing vector field $F$ in \eqref{eq1}, we obtain different families of GCF solitons. To simplify this work, we recall that the isometry group  of $\nil$ is four-dimensional. A canonical basis $\{F_1,F_2,F_3,F_4\}$ of Killing vector fields is associated to   vertical translations, two types of horizontal translations, and   rotations around the vertical axis, respectively.

 Since the elliptic equation \eqref{eq1}  is  remarkably  difficult to solve in all its  generality, we restrict our study to solitons that exhibit   high symmetry properties with respect to the ambient space $\nil$. More precisely, we consider solitons that are invariant under a one-parameter group of isometries of $\nil$.
 
   \begin{definition} \label{df2}
 An immersed surface $\Sigma$ in $\nil$ is said to be invariant under the action of a one-parameter group of isometries $\{\Phi_t\colon t\in\r\}$ if $\Phi_t(\Sigma) = \Sigma$ for all $t$.
  \end{definition} 
 In the Heisenberg group, the one-parameter groups of isometries  fall into four     classes,  depending on the Killing vector field that generates each group:  vertical translations, horizontal translations (of two types),   and helicoidal motions. However, one type of horizontal translation is obtained from the other type after a rotation of $\nil$. This allows us to reduce the analysis to three types of invariant surfaces.
 
It is important to emphasize  that the Killing field   defining the soliton flow in \eqref{eq1} and the Killing field   defining the   invariance 
of the surface $\Sigma$ need not coincide. Therefore, in the classification of solitons invariant under a one-parameter group of isometries, we distinguish between the type of Killing vector field  of the soliton equation \eqref{eq1}   and the type of invariance symmetry characterizing the surface geometry  (Definition \ref{df2}).

The paper is organized as follows. In Section \ref{s2}, we recall the necessary   background on the Heisenberg group $\nil$, introduce its global frame of left-invariant vector fields, and present the canonical classification  for invariant surfaces. Sections \ref{s3}, \ref{s4}, \ref{s5}, and \ref{s6} are devoted to the   study and classification of invariant solitons   associated with  the Killing vector fields $F_1$, $F_2$, $F_3$, and $F_4$, respectively. In each case, equation \eqref{eq1}  is reduced to nonlinear ordinary differential equations, allowing us to find explicit solutions, obtain first integrals, or characterize the periodic and qualitative behavior of the corresponding generating curves.  Thus, since there are four basic Killing vector fields for the flow, the total number of  invariant solitons in $\nil$ is $12$. A summary of this classification is given in Table \ref{table1}. In some cases, there are no solitons; in others, we obtain explicit parametrizations, and in the rest, we can determine the main geometric properties of the solitons via the ODE \eqref{eq1}.

 \begin{table}[htpb]
\centering
\caption{Summary of invariant Gauss curvature  solitons in $\nil$.}\label{table1}
\begin{tabular}{@{}lccc@{}}
\toprule
 $F_k$-soliton & \begin{tabular}{@{}c@{}} Vertical \\ translation\end{tabular} & \begin{tabular}{@{}c@{}}Horizontal \\ translation\end{tabular} & \begin{tabular}{@{}c@{}}Helicoidal \\ motion\end{tabular} \\ \midrule
$F_1$ & \begin{tabular}{@{}c@{}}Explicit \\ vertical planes\end{tabular} & \begin{tabular}{@{}c@{}}Flat ($K=0$) \\ ODE \eqref{16}\end{tabular} & None \\ \addlinespace
$F_2$ & \begin{tabular}{@{}c@{}}Explicit \\ vertical planes\end{tabular} & None & None \\ \addlinespace
$F_3$ & None & \begin{tabular}{@{}c@{}}ODE \eqref{18} \\ explicit curve \eqref{17}\end{tabular} & ODE \eqref{23} \\ \addlinespace
$F_4$ & Explicit curves \eqref{f4v} & None & \begin{tabular}{@{}c@{}}ODE \eqref{f4r0}\\ rotational: ODE \eqref{f4r}  \end{tabular} \\ \bottomrule
\end{tabular}
\end{table}

\section{Preliminaries}\label{s2}

In this section, we find a canonical basis of Killing vector fields in the Heisenberg group and we recall the classification of invariant surfaces.

The Heisenberg group   is the three-dimensional Lie group $(\mathbb{R}^3,\ast)$, where the group product $\ast$ is defined, for any pair of points $(x,y,z), (x',y',z')\in\mathbb{R}^3$, by 
$$(x,y,z)\ast (x',y',z')=\left(x+x', y+y', z+z' + \frac{1}{2}(x y' - y x')\right).$$
This space is endowed with a family of  the left invariant metrics  
$$ds^2 = dx^2+dy^2+\left(\tau(y dx-x dy)+dz\right)^2,$$
where $\tau\not=0$ is a real parameter. Under these metrics, the Heisenberg group becomes  a homogeneous Riemannian manifold with a $4$-dimensional isometry group. Thus, the   Heisenberg group is   viewed as a member of the family of homogeneous spaces $\mathbb E(\kappa,\tau)$ 
  with $\kappa=0$ and $\tau\neq 0$ \cite{daniel,mp}. However,  all such spaces are homothetic. Indeed, if the metric is rescaled by a constant factor 
  $  \lambda^2$, then the bundle curvature transforms according to $\tau \mapsto \tau/\lambda$. Under a homothetic change of   metric $\bar g=\lambda^2 g$, the Gauss curvature and the unit normal transform as
$\bar K=\lambda^{-2}K$ and $\bar N=\lambda^{-1}N$. Hence, the soliton equation \eqref{eq1} is   preserved after rescaling the 
ambient Killing field according to $\bar F=\lambda^{-1}F$. Therefore, no   
  generality is lost by fixing a precise space $E(0,\tau)$. In this paper, we choose $\tau=1/2$ and we refer the Heisenberg group as $\nil=E(0,\frac12)$.

In $\nil$, a global orthonormal tangent frame $\mathcal{B}=\{E_1,E_2,E_3\}$ of left-invariant vector fields is given by  
\begin{equation*}
\begin{split}
E_1 &= \partial_x - \frac{y}{2} \partial_z, \\
 E_2 &= \partial_y + \frac{x}{2} \partial_z, \\
 E_3 &= \partial_z.
 \end{split}
\end{equation*}
The Riemannian connection $\bar{\nabla} $ in terms of $\mathcal{B}$ can be expressed with the matrix 
\begin{equation}\label{co}
(\bar{\nabla}_{E_i}E_j)=\frac12\left( \begin{array}{ccc}
 0  &    E_3  &   -  E_2 \\
  -  E_3  &  0 &     E_1  \\
  -  E_2 &    E_1  &   0\end{array}\right).
  \end{equation}
The four-dimensional Lie algebra of Killing vector fields is globally spanned by the basis $\{F_1,F_2,F_3,F_4\}$, which can be expressed in coordinates with respect to $\mathcal{B}$ as:
\begin{equation*}\label{killingbasis}
\begin{split}
F_1&=\partial_x+\frac{y}{2}\partial_z=E_1+y E_3,\\
F_2&=\partial_y-\frac{x}{2}\partial_z=E_2-x E_3,\\
F_3&=\partial_z=E_3,\\
F_4&=-y\partial_x+x\partial_y=-yE_1+x E_2-\frac{x^2+y^2}{2}E_3.
\end{split}
\end{equation*}
We will restrict the study of solitons to $F_k$-solitons, with $1\leq k\leq 4$. 

On the other hand, the classification of the one-parameter groups of isometries in $\nil$, and consequently, of invariant surfaces (Definition \ref{df2}), is obtained via  the basis $\{F_1,F_2,F_3,F_4\}$.  Invariant surfaces   were deeply studied and classified in \cite{fm}. The classification of invariant surfaces  with constant Gauss curvature is well-known:  invariant under translations   \cite{be,ino}, and  invariant by helicoidal and rotational motions    \cite{ak,caddeo,ik}.

Geometrically, $F_1$ and $F_2$ generate the horizontal translations, $F_3$ generates   vertical translations, and $F_4$ generates the rotational transformations around the vertical $z$-axis of $\r^3$. The explicit coordinate expressions for the one-parameter subgroups of isometries generated by the basic fields are, respectively, the following: \begin{equation}\label{gr}
 \begin{split}
F_1\colon \quad (x,y,z)&\mapsto (x+t,y,z+\frac{y}{2}t), \\
F_2\colon \quad (x,y,z)&\mapsto (x,y+t,z-\frac{x}{2}t),\\
F_3\colon \quad (x,y,z)&\mapsto (x,y,z+t),\\
F_4\colon \quad (x,y,z)&\mapsto (x\cos t -y\sin t,x\sin t+y\cos t,z).
\end{split}
\end{equation}
If the one-parameter subgroup is generated by a generic Killing vector field of the form $X=a_1F_1+a_2F_2+a_3F_3$, with $a_1^2+a_2^2\not=0$, then up to an isometry  of $\nil$, the subgroup is generated by a Killing vector field of the form $aF_1+cF_3$, with $a\not=0$. Indeed, if $R_t$ denotes the rotation about the $z$-axis given in \eqref{gr}, then $(R_t)_*F_1=\cos t\,F_1+\sin t\,F_2$.  Consequently, for any non-trivial linear combination $ a_1F_1+a_2F_2$, with $ (a_1,a_2)\neq(0,0)$,  choosing $t$ such that $a_1=\sqrt{a_1^2+a_2^2}\cos t$ and $ 
a_2=\sqrt{a_1^2+a_2^2}\sin t$, one obtains $X=\sqrt{a_1^2+a_2^2}\,(R_t)_*F_1$.

This geometric reduction  simplifies the classification of  invariant surfaces because,  up to ambient isometries, it suffices to consider the one-parameter groups given by $F_3$, $F_1$ and $ F_4+cF_3$,  where $c\in\r$:  
\begin{enumerate}
\item surfaces invariant under vertical translations (generated by $F_3$);
\item surfaces invariant under horizontal translations (generated by $F_1$);
\item surfaces invariant under   helicoidal motions (generated by $F_4+cF_3$). If $c=0$, the surface is invariant under   rotational motions (generated by $F_4$).
\end{enumerate}

The three types of invariant surfaces together the four types of solitons yields a total of 12 types of invariant solitons in $\nil$.

As a first step, we need suitable parametrizations of the invariant surfaces of $\nil$. For this purpose, it is necessary  to find a generating curve $\gamma$, which   must be transversal to the orbits of the   group of isometries.  Once we have  the curve $\gamma$, we apply the corresponding group  to obtain the parametrization of the surface.

\begin{proposition}\label{pr21}
Up to ambient isometries, invariant surfaces in $\nil$ can be parametrized as follows.
\begin{enumerate}
\item  Vertical translation invariant surfaces.  A generating curve is $\gamma(s)=(x(s),y(s),0)$, $s\in I$, and the  invariant surface is parametrized by
\begin{equation}\label{p1}
\Psi(s,t)=\bigl(x(s),y(s),t\bigr),\quad (s, t) \in I \times \r.
\end{equation}
\item  Horizontal translation invariant surfaces by $F_1$.  A generating curve is $\gamma(s)= (0,y(s),z(s) )$, $s\in I$, 
 and the invariant surface is   parametrized by  
 \begin{equation}\label{p2}
 \Psi(s,t)=\left(t,\,y(s),\,z(s)+\frac{y(s)}{2}t\right),\quad (s, t) \in I \times \r.
\end{equation}
\item Helicoidal surfaces. A generating curve  is $\gamma(s)= (r(s),0,h(s) )$, $s\in I$, and the invariant surface is parametrized by
 \begin{equation}\label{p3}
 \Psi(s,t)=\bigl(r(s)\cos t,\,r(s)\sin t,\,h(s)+ct\bigr),\quad (s, t) \in I \times \r.
\end{equation}

\end{enumerate}
\end{proposition}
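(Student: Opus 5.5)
The proof uses the reduction established just before the statement: up to ambient isometries, every one-parameter group of isometries of $\nil$ is generated by $F_3$, $F_1$, or $F_4+cF_3$. For each case we choose a curve transversal to the orbits, check that it meets every orbit (at least locally), and apply the explicit flows in \eqref{gr} to it. Concretely, if $\Phi_t$ is the flow of the generator and $\gamma$ is the generating curve, then $\Psi(s,t)=\Phi_t(\gamma(s))$. The parametrization is regular wherever $\gamma'(s)$ is not proportional to the generator at $\gamma(s)$.

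\emph{Vertical translations.} The flow of $F_3$ is $(x,y,z)\mapsto(x,y,z+t)$, and its orbits are vertical lines. Each orbit meets the plane $z=0$ exactly once. An invariant surface therefore intersects $\{z=0\}$ in a curve $\gamma(s)=(x(s),y(s),0)$, and applying the flow gives \eqref{p1}. Transversality holds because $\gamma'$ is horizontal and nonzero, while $F_3=E_3$ is vertical.

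\emph{Horizontal translations.} The flow of $F_1$ is $(x,y,z)\mapsto(x+t,y,z+\tfrac{y}{2}t)$. The $x$-coordinate increases by exactly $t$ along it, so every orbit meets the plane $x=0$ in exactly one point. We take $\gamma(s)=(0,y(s),z(s))$. Applying the flow gives $(t,\,y(s),\,z(s)+\tfrac{y(s)}{2}t)$, which is \eqref{p2}. Transversality holds because $\gamma'$ has zero $\partial_x$-component, whereas $F_1=\partial_x+\tfrac y2\partial_z$ has $\partial_x$-component equal to $1$.

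\emph{Helicoidal motions.} The flow of $F_4+cF_3$ is the composition of the rotation $R_t$ with the vertical translation by $ct$. These two flows commute, since $F_4$ and $F_3$ commute. The flow is therefore $(x,y,z)\mapsto(x\cos t-y\sin t,\;x\sin t+y\cos t,\;z+ct)$. Away from the $z$-axis, every orbit meets the half-plane $\{y=0,\ x>0\}$, and we take $\gamma(s)=(r(s),0,h(s))$ there. Applying the flow gives \eqref{p3}. Transversality holds where $r>0$ and $\gamma'\neq0$: the generator at $\gamma(s)$ equals $r\,\partial_y+c\,\partial_z$, while $\gamma'=r'\partial_x+h'\partial_z$. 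These two vectors are independent unless $r'=0$ and $h'=0$.

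The main subtlety lies in this last case. When $c\neq0$, a single orbit meets the half-plane $\{y=0,\,x>0\}$ infinitely often, at $t\in2\pi\mathbb Z$, with heights differing by $2\pi c$. The profile curve is therefore determined only up to vertical shifts by $2\pi c$, and points on the $z$-axis must be excluded or handled by continuity. This does not affect the statement, which only asserts the existence of a parametrization of this form. The remaining points are routine: the reduction to the three generators is the rotation argument $(R_t)_*F_1=\cos t\,F_1+\sin t\,F_2$ given earlier, and multiplying a generator by a nonzero scalar only reparametrizes $t$.
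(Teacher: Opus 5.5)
Your proposal is correct and follows the paper's approach. The paper gives no separate proof beyond remarking that one picks a generating curve transversal to the orbits and applies the explicit flows \eqref{gr}, which is exactly what you do, with the transversality checks, the commutation of $F_3$ and $F_4$, and the $2\pi c$ ambiguity in the helicoidal case made explicit. One correction to your closing aside: rotation and rescaling only bring $a_1F_1+a_2F_2+a_3F_3$ to $aF_1+cF_3$, so removing the $cF_3$ term needs a conjugation by a left translation, e.g.\ the flow $(x,y,z)\mapsto(x,y+b,z-\tfrac{b}{2}x)$ of $F_2$, which pushes $F_1$ forward to $F_1-bF_3$ (the same kind of conjugation moves the rotation axis onto the $z$-axis when $F_4$ appears together with $F_1,F_2$), and the paper also leaves this step implicit.
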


Now we need the expressions for $K$ and $N$ that appear in the soliton equation \eqref{eq1}.

\begin{proposition}\label{pr22}
Let $\Sigma$ be an invariant surface in $\nil$ parametrized as in Proposition \ref{pr21}. The Gauss curvature $K$  and the unit normal vector (in coordinates with respect to $\mathcal{B}$) of $\Sigma$ are given by
\begin{enumerate}
\item Vertical translation invariant surfaces. Suppose $x'^2 + y'^2 = 1$. Then  
\begin{equation}\label{n1}
N =( y',-x',0),
\end{equation}
\begin{equation}\label{k1}
K =-\frac14.
\end{equation}
\item Horizontal translation invariant surfaces. Suppose $y'^2 + z'^2 = 1$. Then  
\begin{equation}\label{n2}
N =  \frac{1}{\sqrt{W}}(-yy', - z', y' ), \quad \text{where } W = \sqrt{1+ y^2y'^2},
\end{equation}
\begin{equation}\label{k2}
K =-\frac{4 y y''+y^4 y'^4+y^2 \left(2 y'^2-4 y'^4\right)+1}{4W^2}.\end{equation}
\item  Helicoidal surfaces. Suppose $r'^2 + h'^2 = 1$. Then 
\begin{equation}\label{n3}
N = \frac{1}{\sqrt{W}} \left( r'\left(c - \frac{r^2}{2}\right)\sin t - rh'\cos t, - \left(r'\left(c - \frac{r^2}{2}\right)\cos t + rh'\sin t\right) , rr' \right),
\end{equation}
where $W = r^2 + r'^2\left(c-\frac{r^2}{2}\right)^2$.  
\begin{equation}\label{k3}
\begin{split}
K &=  \frac{1}{W} \Bigg( \left( 1 - c + \frac{r^2}{2} \right) \left(-r^3r''+ r^2r'^2(1 - r'^2)\left( c - \frac{r^2}{2} \right)  \right) \\
&- \left[ r'^2 \left( c - \frac{r^2}{2} \right) \left( \frac{c}{2} - \frac{r^2}{4} - 1 \right) - \frac{r^2}{2} \right]^2 \Bigg).
\end{split}
\end{equation}
\end{enumerate}
\end{proposition}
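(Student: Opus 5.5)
The plan is a direct computation in the left-invariant frame $\mathcal{B}$, using the connection matrix \eqref{co}. For each of the three parametrizations of Proposition \ref{pr21}, I will first write $\Psi_s$ and $\Psi_t$ in terms of $E_1,E_2,E_3$. For this I use $\partial_x=E_1+\frac y2E_3$, $\partial_y=E_2-\frac x2E_3$ and $\partial_z=E_3$.

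\emph{Vertical case.} Here $\Psi_s=x'E_1+y'E_2+\frac12(yx'-xy')E_3$ and $\Psi_t=E_3$. The vector $N=y'E_1-x'E_2$ is then unit and orthogonal to both, which gives \eqref{n1}.

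\emph{Horizontal case.} Here $\Psi_t=E_1+yE_3$ and $\Psi_s=y'E_2+z'E_3$, the latter using $x=0$ along $\gamma$. The normal is the frame cross product $\Psi_s\times\Psi_t$, normalised.

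\emph{Helicoidal case.} Here $\Psi_s=r'(\cos t\,E_1+\sin t\,E_2)+h'E_3$ and $\Psi_t=-r\sin t\,E_1+r\cos t\,E_2+(c-\frac{r^2}{2})E_3$. Normalising the cross product $\Psi_s\times\Psi_t$ gives \eqref{n3}, with $W=|\Psi_s\times\Psi_t|^2$ reduced via $r'^2+h'^2=1$.

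Next I compute the second fundamental form. For tangent vectors $X=\sum a_iE_i$ and $Y=\sum b_jE_j$ I use
\[
\bar\nabla_XY=\sum_j X(b_j)E_j+\sum_{i,j}a_ib_j\bar\nabla_{E_i}E_j,
\]
with the last sum read off from \eqref{co}. From this I obtain the coefficients $e=\langle\bar\nabla_{\Psi_s}\Psi_s,N\rangle$, $f=\langle\bar\nabla_{\Psi_s}\Psi_t,N\rangle$ and $g=\langle\bar\nabla_{\Psi_t}\Psi_t,N\rangle$. The metric coefficients $E,F,G$ come from the frame components. Finally $K=(eg-f^2)/(EG-F^2)$.

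In the vertical case the computation is short. Since $\Psi_t=E_3$ and the coefficients are $s$-dependent only,
\[
g=\langle\bar\nabla_{E_3}E_3,N\rangle=0,
\]
so $K=-f^2/(EG-F^2)$.

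For $f$, the only contribution comes from the connection terms $\bar\nabla_{E_1}E_3=-\frac12E_2$ and $\bar\nabla_{E_2}E_3=\frac12E_1$. This gives
\[
f=\tfrac12\langle -x'E_2+y'E_1,N\rangle=\tfrac12.
\]
The first fundamental form satisfies $EG-F^2=1$, because the $E_3$-component is absorbed by $G=1$. Hence $K=-\frac14$, which is \eqref{k1}.

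The horizontal case follows the same template. The $t$-independence of the frame coefficients (a consequence of left-invariance combined with invariance) makes all $t$-derivatives vanish, and $W$ arises as $EG-F^2=1+y^2y'^2$ after using $y'^2+z'^2=1$. Here $z''$ must be eliminated using $y'y''+z'z''=0$, which is why only $y$ and its derivatives appear in \eqref{k2}.

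The helicoidal case is the main obstacle, because the frame coefficients depend on $t$ through $\cos t$ and $\sin t$. I would check first that all $t$-dependence cancels in $e$, $f$ and $g$, for instance by evaluating at $t=0$ after noting rotational invariance of the frame expressions. I would then expand $eg-f^2$ and eliminate $h''$ via $r'r''+h'h''=0$ and $h'^2=1-r'^2$. The grouping into the product term with factor $(1-c+\frac{r^2}{2})$ and the squared bracket in \eqref{k3} is essentially $eg$ and $f^2$ respectively. I expect the algebraic simplification to reach exactly that form, together with sign conventions for $N$, to be the delicate part. I would verify it at $c=0$ on a simple example such as a vertical cylinder $r=\text{const}$ as a check.
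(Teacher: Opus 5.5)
Your method is the paper's own: frame components of $\Psi_s,\Psi_t$, normalized cross product, connection read off \eqref{co}, and $K=(eg-f^2)/(EG-F^2)$. Your vertical case is complete and correct. Your inverse formulas $\partial_x=E_1+\frac y2E_3$ and $\partial_y=E_2-\frac x2E_3$ are the right ones; the paper's proof text prints them with the signs swapped, although its computations use yours. The $t$-independence you invoke in the translational cases is also justified, because the flows of $F_1$ and $F_3$ are left translations.

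The step that fails is your expectation that the helicoidal computation lands exactly on \eqref{k3}. Because $N=(\Psi_s\times\Psi_t)/\sqrt W$, each of $e,f,g$ carries a factor $W^{-1/2}$. As you anticipated, $W\,eg$ is the product term and $W f^2$ is the squared bracket. So if $Q$ denotes the whole expression in the large parentheses of \eqref{k3}, then $eg-f^2=Q/W$. Dividing by $EG-F^2=W$ gives $K=Q/W^2$, not $Q/W$.

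The cylinder test you propose confirms this. For $c=0$, $r\equiv r_0$, $h(s)=s$ one has $W=r_0^2$ and $Q=-r_0^4/4$. Then \eqref{k3} as printed would give $K=-r_0^2/4$. But this surface is the vertical cylinder $x^2+y^2=r_0^2$, for which part (1) forces $K=-1/4=Q/W^2$.

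The paper's proof makes the same slip in its last line: its $h_{ij}$ are correct, and in case (2) it correctly keeps $4W^2$. Downstream, $\sqrt W$ should become $W^{3/2}$ on the right-hand sides of \eqref{23} and \eqref{f4r0}. So carrying out your plan proves the corrected formula, and that is what you should claim.

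Two minor points in the horizontal case:
\begin{itemize}
\item $W$ in \eqref{n2} must be read as $1+y^2y'^2$, as you do, not its square root.
\item The normalized $\Psi_s\times\Psi_t$ is $\frac{1}{\sqrt W}(yy',z',-y')$, the negative of \eqref{n2}. To reproduce \eqref{n2} you need $\Psi_t\times\Psi_s$. This does not affect $K$, but it is the sign that enters \eqref{18}.
\end{itemize}
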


\begin{proof}
For a parametrization $\Psi$ of a surface $\Sigma$, let  $g_{ij}=\langle \Psi_i, \Psi_j \rangle$  and $h_{ij} = \langle \bar{\nabla}_i \Psi_j, N \rangle$ denote the coefficients of the first and second fundamental form, respectively.  Let $W=g_{11}g_{22}-g_{12}^2$ be the determinant of the metric matrix $(g_{ij})$. The Gauss curvature is calculated via $K = \frac{h_{11}h_{22}-h_{12}^2}{W}$. For the determination of the unit normal vector $N$, we calculate the cross-product $\Psi_s\times \Psi_t$ and next, we divide by its modulus, which coincides with $\sqrt{W}$.

We now consider the three types of invariant surfaces parametrized by \eqref{p1}, \eqref{p2} and \eqref{p3}. In the computations, we use the connection matrix \eqref{co} together the expressions of the basic vector fields $\{\partial_x,\partial_y,\partial_z\}$ in terms of $\mathcal{B}$, namely, $\partial_x=E_1-\frac{y}{2}E_3$, $\partial_y=E_2+\frac{x}{2}E_3$ and $\partial_z=E_3$.

\begin{enumerate}
\item    The basic tangent vectors are $\Psi_s = x' \partial_x + y' \partial_y$ and $\Psi_t = \partial_z$, which expressed in terms of   $\mathcal{B}$ are:
$$\Psi_s = (x' , y' , \frac{1}{2}(x'y - y'x)), \quad \Psi_t = (0,0,1).$$
The metric coefficients are $g_{11} = 1 + \frac{1}{4}(x'y - y'x)^2$, $g_{12} = \frac{1}{2}(x'y - y'x)$, and $g_{22} = 1$, which gives $W= 1$. The unit normal vector is $N = \Psi_s \times \Psi_t =( y', - x',0)$.  

Using the connection matrix \eqref{co}, we have $\bar{\nabla}_t \Psi_t = 0$. Thus, for the calculation of $K$, it suffices  to compute
 $\bar{\nabla}_s \Psi_t  =\frac{1}{2}(y'E_1 -  x'E_2)$. Then    $h_{12}= - \frac{1}{2}$, and this gives $K=-1/4$.

\item The basic tangent vectors are  $\Psi_s =(0, y', z')$ and $\Psi_t = (1,0,y)$. The metric coefficients are $g_{11} = y'^2+z'^2 = 1$, $g_{12} =  yz' $, and $g_{22} = 1+y^2$. Then  $W = 1+y^2y'^2$. The cross-product of $\Psi_s$ and $\Psi_t$ gives  \eqref{n2}.  

For the coefficients $h_{ij}$ of the second fundamental form, we first find 
\begin{equation*}
\begin{split}
\bar{\nabla}_s \Psi_s &=  (y'z', y'', z''),\\
\bar{\nabla}_s \Psi_t &= (  \frac{1}{2}yy', - \frac{1}{2}z', \frac{1}{2}y'),\\
\bar{\nabla}_t \Psi_t &= (0, -y,0).
\end{split}
\end{equation*}
This gives 
$$K = \frac{4yz'(y'z'' - z'y'' - yy'^2z') - (1 - y^2y'^2)^2}{4W^2}.$$
Using $z'^2=1-y'^2$ and $z'z''=-y'y''$, we obtain the desired value for $K$ given in \eqref{k2}.

\item The basic  tangent vectors of the surface are 
\[
\Psi_s =( r'\cos t  , r'\sin t , h'), \quad \Psi_t = (-r\sin t , r\cos t , (c - \frac{r^2}{2})).
\]
The coefficients $g_{ij}$   are:
$$g_{11} = 1, \quad g_{12} = h'\left(c - \frac{r^2}{2}\right), \quad g_{22} = r^2 + \left(c - \frac{r^2}{2}\right)^2.$$
Using that $r'^2+h'^2=1$, we have $ W = r'^2 \left( c - \frac{r^2}{2} \right)^2 + r^2$. 
For  the coefficients of the second fundamental form, and in coordinates with respect to $\mathcal{B}$, we have 
\begin{equation*}
\begin{split}
\bar{\nabla}_{\Psi_s}\Psi_s&=(h' r'\sin t +r''\cos t  , r''\sin t- h' r'\cos t,h''),\\
\bar{\nabla}_{\Psi_s}\Psi_t&=\frac14\left(  \left(2 r h'\cos t  -r' \left(-2 c+r^2+4\right) \sin t\right),  \left(r' \left(-2 c+r^2+4\right) \cos t+2 r \sin t h'\right),-2 r r' \right),\\
\bar{\nabla}_{\Psi_t}\Psi_t&=\frac12(r \cos t  (2c-  r^2-2 ),   r \sin t \left( 2 c-r^2-2\right),0).
\end{split}
\end{equation*}
This yields
\begin{equation*}
\begin{split}
h_{11} &= \frac{1}{\sqrt{W}} \left[ r(r'h'' - r''h') + h'r'^2 \left( c - \frac{r^2}{2} \right) \right],\\
 h_{12} &= \frac{-4 r^2 \left((c-1) r'^2+1\right)+4 (c-2) c r'^2+r^4 r'^2}{8 \sqrt{W}},\\
 h_{22} &= -\frac{r^2 h'}{\sqrt{W}} \left(c - 1 - \frac{r^2}{2}\right).
\end{split}
\end{equation*}
This gives the expression for $K$  given in \eqref{k3}, after using the relations $h'^2=1-r'^2$ and $h'h''=-r'r''$.
\end{enumerate}
\end{proof}

  We have stated that there are 12 types of invariant solitons in $\nil$ because we combine the four possible generating fields  $\{F_1, F_2, F_3, F_4\}$ for the 
  soliton equation \eqref{eq1}with the three canonical geometries for the invariant surfaces (horizontal translations, 
  vertical translations, and helicoidal motions). We point out that the  study of $F_2$-solitons can be reduced to $F_1$-solitons 
  as is proven in the following result. 

\begin{proposition}
\label{pr23}
  Consider the isometry $\phi: \text{Nil}_3 \to \text{Nil}_3$ given by
$$\phi(x, y, z) = (-y, x, z).$$
If $\Sigma \subset \text{Nil}_3$ is an immersed surface, then $\Sigma$ is an $F_1$-soliton  if and only if the image surface $\bar{\Sigma} = \phi(\Sigma)$ is an $F_2$-soliton.
\end{proposition}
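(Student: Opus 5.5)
The plan is to show that $\phi$ is an isometry of $\nil$ whose differential carries $F_1$ to $F_2$, and then use the fact that both sides of the soliton equation \eqref{eq1} are invariant under isometries when the unit normal is transported along with the surface.

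\emph{Step 1: $\phi$ is an isometry.} I will show that $d\phi$ maps the orthonormal frame $\mathcal{B}$ at $p$ to an orthonormal frame at $\phi(p)$. We have $d\phi(\partial_x)=\partial_y$, $d\phi(\partial_y)=-\partial_x$ and $d\phi(\partial_z)=\partial_z$. Write $p=(x,y,z)$ and $\phi(p)=(X,Y,Z)=(-y,x,z)$. Then
\[
d\phi(E_1)=\partial_y-\tfrac{y}{2}\partial_z=\partial_y+\tfrac{X}{2}\partial_z=E_2|_{\phi(p)},
\]
and similarly
\[
d\phi(E_2)=-\partial_x+\tfrac{x}{2}\partial_z=-\partial_x+\tfrac{Y}{2}\partial_z=-E_1|_{\phi(p)}.
\]
Together with $d\phi(E_3)=E_3$, this shows $d\phi$ sends an orthonormal frame to an orthonormal frame, so $\phi$ is an isometry. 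Alternatively, $\phi=R_{\pi/2}$ is the time-$\pi/2$ map of the flow of $F_4$ in \eqref{gr}.

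\emph{Step 2: $\phi_*F_1=F_2$.} This follows from the identity $(R_t)_*F_1=\cos t\,F_1+\sin t\,F_2$ recalled in Section \ref{s2}, taken at $t=\pi/2$. It can also be checked directly: $d\phi(\partial_x+\tfrac{y}{2}\partial_z)=\partial_y+\tfrac{y}{2}\partial_z=\partial_y-\tfrac{X}{2}\partial_z$, which is $F_2$ at $\phi(p)$.

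\emph{Step 3: transport of the soliton equation.} Let $\Sigma$ have unit normal $N$ satisfying \eqref{eq1} with $F=F_1$. Equip $\bar\Sigma=\phi(\Sigma)$ with the normal $\bar N=d\phi(N)$. Since $\phi$ is an isometry, it preserves the Levi-Civita connection. Hence the second fundamental form of $\bar\Sigma$ with respect to $\bar N$ equals that of $\Sigma$ with respect to $N$, and so $\bar K\circ\phi=K$. Moreover,
\[
\langle \bar N,F_2\rangle\circ\phi=\langle d\phi N,d\phi F_1\rangle=\langle N,F_1\rangle .
\]
It follows that $\bar K=-\langle\bar N,F_2\rangle$, so $\bar\Sigma$ is an $F_2$-soliton. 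For the converse, apply the same argument to $\phi^{-1}$, which is an isometry with $(\phi^{-1})_*F_2=F_1$.

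The only subtle point is the choice of normal. $K$ is unchanged under $N\mapsto -N$, but the right-hand side of \eqref{eq1} changes sign, so the transported normal $d\phi(N)$ must be used, not an arbitrary one. The genuinely computational step is Step 1, and it reduces to the two frame checks shown above.
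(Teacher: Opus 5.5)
Your proposal is correct and follows essentially the same route as the paper. Both verify $\phi_*F_1=F_2\circ\phi$ in coordinates, transport the normal via $\bar N=\phi_*N$ so that $\bar K\circ\phi=K$ and $\langle\bar N,F_2\rangle\circ\phi=\langle N,F_1\rangle$, and get the converse from $\phi^{-1}$; your explicit frame check that $\phi$ is an isometry and your remark on the sign of the normal just spell out what the paper takes for granted.
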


\begin{proof}  
Note that $\phi$ is the $\pi/2$-rotation that appeared in \eqref{gr}. 
 Let $K$ and $\bar{K}$ denote the respective Gauss curvatures of $\Sigma$ and $\bar{\Sigma}$, and let $N$ and $\bar{N}$ be their unit normal fields. Because $\phi$ is an isometry, we have   $\bar{K}(\phi(p)) = K(p)$, and  $\bar{N}(\phi(p)) = \phi_*(N(p))$, where   $\phi_*$ is the pushforward of $\phi$. 

Let $\phi(x,y,z)=(u(x,y,z),v(x,y,z),w(x,y,z))$, where $u = -y$, $v = x$ and $w=z$. Recall that    $F_1 = \partial_x + \frac{y}{2}\partial_z$ and $F_2=\partial_y-\frac{x}{2}\partial_z$. Then
\begin{equation*}
\phi_*(F_1)=\phi_*(\partial_x)+\frac{y\circ \phi^{-1}}{2}\phi_*(\partial_z) =\partial_v-\frac{u}{2}\partial_w=F_2\circ\phi.
\end{equation*}

Assume that $\Sigma$ is a soliton with respect to $F_1$. Then  $K(p) = -\langle N(p), F_1|_p \rangle$ for all $p \in \Sigma$. Then, evaluating on the image surface $\bar{\Sigma}$, we have 
\begin{equation*}
\begin{split}
\bar{K}(\phi(p)) &= K(p) = -\langle N(p), F_1|_p \rangle = -\langle \phi_*(N(p)), \phi_*(F_1|_p) \rangle = -\langle \bar{N}(\phi(p)), F_2|_{\phi(p)} \rangle\\ 
& = -\langle \bar{N}, F_2 \rangle\circ\phi,
\end{split}
\end{equation*}
 which implies that $\bar{\Sigma}$ is an $F_2$-soliton.
 
The converse follows immediately by reversing the sequence of equalities, since $\phi$ is a global isometry of $\nil$ and its inverse $\phi^{-1}(u,v,w) = (v,-u,w)$ satisfies $(\phi^{-1})_*(F_2) = F_1 \circ \phi^{-1}$.
\end{proof}

However, this proposition cannot be used to further reduce the total number of 12 invariant soliton cases. The   reason is that the isometry $\phi$  applied to change $F_2$-soliton into $F_1$-soliton also acts   on the surface, thereby  transforming its geometric symmetry. 
Indeed, we have already utilized rotations of $\nil$  to normalize the one-parameter groups of isometries into three canonical forms.   If we 
apply an additional rotation to transform, for instance, an $F_2$-soliton into an $F_1$-soliton, the same rotation will alter the   invariance of the surface.   Consequently, the combinations of solitons   and invariant surfaces     must be 
analyzed as   12 distinct  cases.  

\section{Invariant solitons with respect to the vector field $F_1=\partial_x+\frac{y}{2}\partial_z$}\label{s3}

In this section, we investigate the first type of solitons, the $F_1$-solitons, where  $F_1=\partial_x+\frac{y}{2}\partial_z$. In the following result, we obtain a first classification of the invariant $F_1$-solitons, where equation \eqref{eq1} is evaluated in each of the three types of invariant surfaces.

\begin{proposition}
An invariant surface $\Sigma$ in $\text{Nil}_3$ is an $F_1$-soliton if and only if one of the following conditions holds:
\begin{enumerate}
\item $\Sigma$ is invariant under vertical translations, in which case it is a vertical plane parametrized by
\begin{equation}\label{f1h}
\Psi(s, t) = \left( \pm \frac{\sqrt{15}}{4}s + x_0,    \frac{1}{4}s + y_0,  t \right), \quad x_0, y_0 \in \mathbb{R}.
\end{equation}
\item $\Sigma$ is invariant under horizontal translations, in which case it is flat ($K=0$), and its generating curve satisfies the ODE\begin{equation}\label{16}
4 y y''+y^4 y'^4+y^2 \left(2 y'^2-4 y'^4\right)+1 = 0.
\end{equation}
\item There are no $F_1$-solitons invariant under helicoidal motions.
\end{enumerate}
\end{proposition}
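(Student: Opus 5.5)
The plan is to substitute, for each of the three parametrizations of Proposition \ref{pr21}, the expressions of $K$ and $N$ from Proposition \ref{pr22} into the soliton equation \eqref{eq1}. To compute $\langle N,F_1\rangle$ we write $F_1=E_1+yE_3$ in the orthonormal frame $\mathcal{B}$. The $y$ appearing there is the $y$-coordinate of the point of $\Sigma$, so it must be read off from \eqref{p1}, \eqref{p2} or \eqref{p3}. Since $\mathcal{B}$ is orthonormal, $\langle N,F_1\rangle=N^1+y\,N^3$, where $N^i$ are the coordinates of $N$ with respect to $\mathcal{B}$.

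\emph{Vertical translations.} By \eqref{n1}, $N=(y',-x',0)$, so $\langle N,F_1\rangle=y'$. By \eqref{k1}, $K=-1/4$, so \eqref{eq1} reads $y'=1/4$. The arc-length condition $x'^2+y'^2=1$ then forces $x'=\pm\sqrt{15}/4$. Integrating gives $x=\pm\frac{\sqrt{15}}{4}s+x_0$ and $y=\frac14 s+y_0$, and inserting these into \eqref{p1} yields \eqref{f1h}. Conversely, these planes satisfy \eqref{eq1} by the same computation.

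\emph{Horizontal translations.} On \eqref{p2} the $y$-coordinate of the point is $y(s)$. By \eqref{n2},
$$\langle N,F_1\rangle=\frac{1}{\sqrt W}\bigl(-yy'+y\cdot y'\bigr)=0 .$$
Hence \eqref{eq1} is equivalent to $K=0$. By \eqref{k2}, and since $W>0$, this is exactly the vanishing of the numerator, which is \eqref{16}.

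\emph{Helicoidal motions.} On \eqref{p3} the $y$-coordinate is $r\sin t$. Using \eqref{n3},
$$\sqrt W\,\langle N,F_1\rangle=r'\Bigl(c-\frac{r^2}{2}\Bigr)\sin t-rh'\cos t+r^2r'\sin t=r'\Bigl(c+\frac{r^2}{2}\Bigr)\sin t-rh'\cos t .$$
The key observation is that the left-hand side of \eqref{eq1}, namely $K$ in \eqref{k3}, together with $W$, depends only on $s$. The identity must hold for all $t$, so the coefficients of $\sin t$ and $\cos t$ must vanish separately, and in addition $K=0$.

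The vanishing of these coefficients gives $rh'=0$ and $r'(c+r^2/2)=0$. Regularity of the immersion ($W>0$) prevents $r$ from vanishing on an open interval, so $h'=0$ on an open set. Then $r'^2=1$, and $c+r^2/2=0$ must hold there. This is impossible because $r$ is strictly monotone. Hence no helicoidal $F_1$-solitons exist.

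The main point needing care is this last case. One must justify passing from the $t$-dependent identity to the coefficient equations, which uses the linear independence of $\sin t$ and $\cos t$. One must also handle the points where $r=0$ or $r'=0$, which I would do by working on an open dense set and using continuity. The other two cases are direct substitutions.
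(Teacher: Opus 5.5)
Your proposal is correct and follows the paper's proof: you substitute \eqref{n1}--\eqref{n3} and \eqref{k1}--\eqref{k3} into \eqref{eq1}, and in the helicoidal case you use the $t$-dependence to force $rh'=0$ and $r'(c+r^2/2)=0$, which is incompatible with $r'^2+h'^2=1$. You are slightly more careful than the paper here: it divides by $r$ without comment (and writes $c-\frac{r^2}{2}$ where $c+\frac{r^2}{2}$ is meant), while you justify $h'=0$ using $W>0$ and a density/continuity argument.
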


\begin{proof}
In terms of the left-invariant orthonormal frame $\mathcal{B}$, the Killing vector field $F_1$ is expressed as $F_1 = E_1 + y E_3$. Thus, if   $N = (N_1, N_2, N_3)$ denotes the unit normal vector field, then  $\langle N, F_1 \rangle = N_1 + y N_3$. 
\begin{enumerate}
    \item For vertical translation invariant surfaces parametrized by \eqref{p1}, with $x'^2+y'^2=1$,  equation \eqref{n1} yields $\langle N, F_1 \rangle = y'$. Since $K = -1/4$, the soliton equation   becomes $y' = 1/4$. Substituting  this into  the arc-length condition $x'^2 + y'^2 = 1$, we obtain $x'^2 =  \frac{15}{16}$, which yields $x(s) = \pm \frac{\sqrt{15}}{4}s + x_0$.      
    \item For horizontal translation invariant surfaces parametrized by  \eqref{p2}, with $y'^2+z'^2=1$, the unit normal is given by  $N = \frac{1}{\sqrt{W}}(-yy', -z', y')$, where $W=(1+y^2)y'^2+z'^2$, which implies  $\langle N, F_1 \rangle = 0$. Thus, the soliton equation \eqref{eq1} reduces to $K = 0$. Using \eqref{k2},  we arrive at \eqref{16}.
    \item By virtue of  \eqref{n3} and \eqref{k3}, the soliton equation \eqref{eq1} reads
     \begin{equation*}
        \begin{split}
        &  \left( 1 - c + \frac{r^2}{2} \right)\left(-r^3r'' + r^2r'^2(1 - r'^2)\left( c - \frac{r^2}{2} \right)  \right) \\
        &\quad - \left[ r'^2 \left( c - \frac{r^2}{2} \right) \left( \frac{c}{2} - \frac{r^2}{4} - 1 \right) - \frac{r^2}{2} \right]^2   = - \sqrt{W} \left( r'\left(c + \frac{r^2}{2}\right)\sin t - rh'\cos t \right),
        \end{split}
    \end{equation*}
    where $W = r^2 + r'^2\left(c-\frac{r^2}{2}\right)^2$.  Since the  left-hand side is independent of $t$,  the right-hand side implies that   $h'=0$ and $r'(c-\frac{r^2}{2})=0$. For $r$, we deduce that $r$ is constant and then, $r'=0$. This   contradicts the fact that   the generating curve is parametrized by arc-length. This proves that not such a surface exists. 
\end{enumerate}
\end{proof}

In the following result, we study the solutions of Eq. \eqref{16}. First, we obtain a first integral of this equation and next, we describe the   properties of the generating curve: see Fig. \ref{fig1}, left. 

\begin{theorem}\label{t32}
 A first integral of \eqref{16} is given by
\begin{equation}\label{i1}
\frac{1}{1+y^2y'^2}=1+y^2\!\left(C+\frac12\log |y|\right),
\end{equation}
where $C$ is an arbitrary constant. Moreover,  $y(s)$ is determined implicitly by  
\begin{equation}\label{i2}
s-s_0=\pm\int_{y_0}^{y}\frac{y\,dy}{\sqrt{\dfrac{1}{1+y^2\left(C+\frac12\log |y|\right)}-1}}.
\end{equation}
Let  $\gamma(s)=(y(s),z(s))$ be the generating curve contained in the $yz$-plane. Applying the symmetry $(y,z)\mapsto (-y,z)$ if necessary, we can assume $y>0$. Then     $\gamma$  has the following  properties:
\begin{enumerate}
\item The curve $\gamma$ is symmetric about a horizontal line of the $yz$-plane.
\item The function $y=y(s)$ has a unique extremum, which is a maximum. If $C\to \infty$, then $\gamma$ approaches the $z$-axis.
\item There exist constants $0<y_{\min}<y_{\max}$ such that $y_{\min}< y(s)\leq y_{\max}$ for all $s\in I$.   
\item The curve $\gamma$ is a concave graph over a bounded interval $(z_0,z_1)$ of the $z$-axis, and the tangent vectors of $\gamma$ at these points are parallel to the $y$-axis. \end{enumerate}

\end{theorem}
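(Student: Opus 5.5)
Since \eqref{16} does not contain $s$ explicitly, I will reduce its order by setting $p=y'$ and regarding $p$ as a function of $y$. Then $y''=p\,dp/dy$. The equation becomes
$$4y\,p\,\frac{dp}{dy}+y^4p^4+2y^2p^2-4y^2p^4+1=0.$$
The natural variable is $u=y^2p^2$, that is $W^2=1+u$, because the term $y^4p^4+2y^2p^2+1=(1+u)^2$ appears, leaving the extra term $-4y^2p^4$. Computing $du/dy=2yp^2+2y^2p\,dp/dy$ gives $4yp\,dp/dy=2(du/dy)/y-4p^2$. Substituting and multiplying by $y^2$, I expect a Bernoulli/Riccati-type equation in $u$ of the form
$$2y\,\frac{du}{dy}+(1+u)^2-4(u+u^2)=\dots,$$
which has to be checked carefully. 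The claimed integral suggests working with $v=1/(1+u)=1/W^2$: then $(v-1)/y^2=C+\tfrac12\log|y|$, so $\frac{d}{dy}\bigl((v-1)/y^2\bigr)=1/(2y)$. My plan is to verify this identity directly. I will differentiate $G(y,p)=\bigl(\tfrac{1}{1+y^2p^2}-1\bigr)y^{-2}-\tfrac12\log|y|$ along solutions, use \eqref{16} to eliminate $y''$, and check that the result vanishes. This gives \eqref{i1}.

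Next I solve \eqref{i1} for $y'^2$. From $1+y^2y'^2=1/(1+y^2(C+\frac12\log y))$ we get $y\,y'=\pm\sqrt{1/(1+y^2(C+\frac12\log y))-1}$. Separating variables gives \eqref{i2}.

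For the qualitative properties I will write $f(y)=y^2(C+\tfrac12\log y)$, so that $y^2y'^2=-f/(1+f)$. Reality forces $-1<f\le 0$. Now $f\to0^-$ as $y\to0^+$ and $f(y)=0$ at $y_{\max}=e^{-2C}$. In between, $f$ is negative with a single minimum. The constraint $f>-1$ must hold on the whole range, and it can fail near that minimum only if $C$ is very negative. Item 3 then follows: $y$ lies in the set where $-1<f\le0$, whose endpoints give $y_{\min}$ (where $f\to-1$, if the minimum dips below $-1$) and $y_{\max}$. I also need the extra condition $y'^2\le1$ from arc length; it has to be imposed too and may produce $y_{\min}$ instead. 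At $y_{\max}$ we have $y'=0$, and \eqref{16} gives $y''=-1/(4y)<0$, so this point is a strict maximum. It is the unique critical point, since $y'=0$ forces $f=0$. This proves item 2, and $C\to\infty$ gives $y_{\max}=e^{-2C}\to0$, so $\gamma$ approaches the $z$-axis. Item 1 follows because \eqref{16} is invariant under $s\mapsto -s$ and $z\mapsto -z$: the solution is even about the instant of maximum, so $\gamma$ is symmetric about the horizontal line $z=z(s^*)$.

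For item 4, I will use $z'=\sqrt{1-y'^2}$, with sign chosen positive. At the ends of the maximal interval $|y'|\to1$, so $z'\to0$ there and the tangent becomes parallel to the $y$-axis. Since $z$ is monotone, $\gamma$ is a graph $y=y(z)$ over a bounded interval $(z_0,z_1)$. Boundedness follows from the finiteness of the integral \eqref{i2} near the endpoints. Concavity means $d^2y/dz^2<0$, which I will derive from $y''<0$: \eqref{16} gives $4yy''=-(1+y^2y'^2)^2+4y^2y'^4$. Determining the sign of this expression, and pinning down exactly which constraint ($f>-1$ or $|y'|\le1$) determines $y_{\min}$, is the main obstacle. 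I will handle it by rewriting everything in terms of $u=y^2y'^2$ and using $y'^2<1$.
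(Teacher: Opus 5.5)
Your first integral, the formula for $s(y)$, the maximum argument ($y'=0$ only where $C+\frac12\log y=0$, and there $y''=-1/(4y)<0$) and the reflection symmetry are sound and close to the paper. For the record, the reduced equation is $2yu'+y^2(1+u)^2-4u(1+u)=0$, which is linear in $w=1/(1+u)$. Your direct differentiation check also works, and it even covers points with $y'=0$.

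The genuine gap is item 3: you never identify $y_{\min}$, and the first candidate you propose ($f\to-1$) never occurs. Since $1+f=1/(1+y^2y'^2)$, the arc-length bound $y'^2\le 1$ gives $1+f\ge 1/(1+y^2)$, i.e.\ $f\ge -y^2/(1+y^2)>-1$. Indeed, $f\to-1$ would force $y^2y'^2\to\infty$. Conversely, when $\min f=-\frac14 e^{-4C-1}>-1$, the reality condition alone gives no positive lower bound at all. So $y_{\min}$ must come from $|y'|\le1$, and the missing step is the following. With $\phi=C+\frac12\log y$ one has $y'^2=-\phi/(1+y^2\phi)$, so $y'^2\le 1$ if and only if $g(y)=C+\frac12\log y+\frac{1}{1+y^2}\ge 0$. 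Moreover $g\to-\infty$ as $y\to0^+$, $g(y_{\max})=1/(1+y_{\max}^2)>0$, and $g'(y)=\frac{(1-y^2)^2}{2y(1+y^2)^2}\ge 0$. Hence $g$ has a unique zero $y_{\min}\in(0,y_{\max})$ and $g>0$ on $(y_{\min},y_{\max}]$.

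Item 4 depends on exactly this. The positivity of $g$ on $(y_{\min},y_{\max}]$ is what gives $y'^2<1$, hence $z'\neq 0$ in the interior. That yields monotone $z$, the graph property, and the constant sign of $z'$ that your symmetry argument also uses. It also gives $|y'|\to 1$ precisely as $y\to y_{\min}$.

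With that in place, bounding the $z$-interval by the finite length $2\int_{y_{\min}}^{y_{\max}}dy/|y'|$ is a legitimate alternative to the paper's estimate of $\int (dz/dy)\,dy$. This works because $-\phi=\frac12\log(y_{\max}/y)$ vanishes to first order at $y_{\max}$, so the singularity is an integrable $(y_{\max}-y)^{-1/2}$.

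Your concavity reduction $d^2y/dz^2=y''/z'^4$ is correct, and the sign you left open closes in one line. With $u=y^2y'^2$,
\[
-4yy''=(1+u)^2-4uy'^2=(1-u)^2+4u(1-y'^2),
\]
which is positive whenever $y'^2<1$ (it equals $1$ if $u=0$). This is the paper's completion of the square $(1-y^2\cos^2\theta)^2+4y^2\cos^2\theta\sin^2\theta$.
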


\begin{proof}
The function $y(s)$ cannot be zero; in fact, y(s) cannot approach $0$, that is, $y(s)$ remains strictly bounded away from $0$. This is because if $y\to 0$ in \eqref{16}, we arrive at the contradiction $0=1$. 

Let $p(y)=y'(s)$. Then $y'' = pp'$. Rewriting \eqref{16}, we obtain  
$$4y pp' + \bigl(1+y^2p^2\bigr)^2-4y^2p^4 = 0.$$
Introduce the variable $u(y)=y^2y'^2=y^2p^2$.  Differentiating with respect to $y$, 
\[u'=2yp^2+2y^2pp'.\]
 Substituting into the previous  equation, and using the fact that $p^2=u/y^2$, we have
$$2yu'-4u-4u^2+(1+u)^2y^2=0. $$
By setting $v=u+1$, this equation becomes
\begin{equation}\label{2y}
2y\,v'+v (4+(y^2-4)v)=0.
\end{equation}
Next, introduce
$$w=\frac1v.$$
Using $v'=-\frac{w'}{w^2}$,   equation \eqref{2y} transforms into the linear equation  
\[w'-\frac{2}{y}w=\frac{y}{2}-\frac{2}{y}.\]
The integrating factor is $\mu(y)=e^{\int -2/y\,dy}=y^{-2}$. 
Hence
\[\left(\frac{w}{y^2}\right)'=\frac1{2y}-\frac2{y^3}.\]
Integrating, and using that $y>0$, 
\[\frac{w}{y^2}=\frac12\log y+\frac1{y^2}+C,\]
and therefore
\[w=1+y^2\!\left(C+\frac12\log y\right).\]
Coming back to $y$, via $v$ and $u$, we arrive at    \eqref{i1}. The expression \eqref{i2} is immediate from isolating $y'$ in \eqref{i1}.

We proceed to prove the properties of $\gamma$. Since $\gamma$ is   parametrized by arc-length, there is a smooth function $\theta=\theta(s)$ such that   $y'=\cos\theta$ and $z'=\sin\theta$. The derivative $\theta'(s)$ represents the Euclidean curvature $\kappa(s)$ of  $\gamma(s)=(y(s),z(s))$. Then equation \eqref{16} writes as 
$$(4y\sin\theta)\theta'= 1+2y^2\cos^2\theta+(y^2-4)y^2\cos^4\theta.$$
Thus $\theta\in (0,\pi)$ and $\gamma$ is characterized by the system
\begin{equation}\label{d1}
\begin{split}
y'&=\cos\theta,\\
z'&=\sin\theta,\\ 
\theta'&= \frac{1+2y^2\cos^2\theta+(y^4-4y^2)\cos^4\theta}{4y\sin\theta}.
\end{split}
\end{equation}

Since the function $z$ does not appear in \eqref{16}, any vertical translation in $\nil$ preserves the solutions of \eqref{16}. Without loss of generality, we can assume initial conditions $y(0)=y_0>0$, $z(0)=0$ and $\theta(0)=\pi/2$. It is easy to check that the functions 
 $\bar{y}(s)=y(-s)$, $\bar{z}(s)=-z(-s)$ and $\bar{\theta}(s)=\pi-\theta(-s)$ are also   solutions of \eqref{d1} with the same initial conditions. By uniqueness, $\bar{\gamma}(s)=(\bar{y}(s),\bar{z}(s))=\gamma(s)$. This proves that $\gamma$ is symmetric about the line $z=0$. This proves   item (1).
 
 Let 
 $$f(y) = C + \frac{1}{2}\log y$$
 denote the parenthesis in the right-hand side of \eqref{i1}. Then, we have
\begin{equation}\label{et}
y'^2 = \frac{-f(y)}{1 + y^2 f(y)}.
\end{equation}
Since $0\leq y'^2\leq 1$, we have a necessary condition on the right-hand side of \eqref{et}.  
\begin{enumerate}
\item First, from   $y'^2\geq 0$,   we must have   $f(y) \le 0$. This yields $ C + \frac{1}{2}\log y \le 0$, or equivalently,  $ y \le e^{-2C} =: y_{\max}$. In particular, the function $y(s)$ is bounded from above. At   $y = y_{\max}$, we have $f(y_{\max}) = 0$, which implies $y' = 0$. From \eqref{16}, at this critical point, we have $y''=-\frac{1}{4y}<0$ because $y>0$, proving that $y$ reaches a local extremum.  In fact, any other critical point of $y$ also satisfies $y''=-\frac{1}{4y}<0$. Thus, all critical points should be local maximum. This proves the uniqueness of the maximum. 

It is immediate that if   $C \to +\infty$,   then $y_{\max}\to 0$ and $\gamma$ tends to the $z$-axis.  This proves   item (2).

\item We now study the condition  $y'^2 \leq 1$. This implies   
\begin{equation*}
   C + \frac{1}{2}\log y + \frac{1}{1+y^2} \ge 0.
\end{equation*}
Let us define the continuous  function $g(y) = C + \frac{1}{2}\log y + \frac{1}{1+y^2}$ on the interval $(0, y_{\max}]$. The function $g$ has the following properties:  
\begin{enumerate}
    \item    $\lim_{y \to 0^+} g(y) = -\infty$. Thus, the profile curve can never reach or cross the axis $y=0$. Moreover,   $g(y_{\max}) = \frac{1}{1+e^{-4C}} > 0$.
   \item The function $g(y)$ is strictly monotonically increasing because  $ g'(y) =   \frac{(1-y^2)^2}{2y(1+y^2)^2} \ge 0$. 
   \end{enumerate}  
    By the Intermediate Value Theorem, there exists a unique  $y_{\min} \in (0, y_{\max})$ such that $g(y_{\min}) = 0$. At this minimum height $y = y_{\min}>0$, we have $y'^2 = 1$ and $y_{\min}\leq y(s)$.  
 \end{enumerate}
 
 We now prove that $\gamma$ is a graph over a bounded interval $(z_0, z_1)$ of the $z$-axis and cannot be smoothly extended beyond $y = y_{\min}$. We have
\begin{equation}\label{ez}
z'^2 =   \frac{1+y^2}{1 + y^2 f(y)} g(y).
\end{equation}
For any $y \in (y_{\min}, y_{\max}]$, we have $g(y) > 0$ and $1+y^2 f(y) > 0$, which strictly implies $z'^2 > 0$. Hence, $z'(s)$ never vanishes in the interior of the profile, ensuring that $z(s)$ is strictly monotonic. Consequently, the curve   can be globally parametrized as a graph $y = y(z)$.

  Using the chain rule,  we obtain from \eqref{et} and \eqref{ez} that
$$\left(\frac{dz}{dy}\right)^2 = \frac{z'^2}{y'^2} = \frac{(1+y^2)g(y)}{-f(y)}.$$
Then, using that $f\leq 0$, we have
\begin{equation*}
\begin{split}
  z_{\max}-z_{\min} &= \int_{y_{\min}}^{y_{\max}} \sqrt{\frac{(1+y^2)g(y)}{-f(y)}} \, dy=\int_{y_{\min}}^{y_{\max}} \sqrt{\frac{ (1+y^2)f(y) + 1}{-f(y)}} \, dy\\
  &\leq \int_{y_{\min}}^{y_{\max}} \sqrt{\frac{ 1}{-f(y)}} \, dy.
  \end{split}
\end{equation*}
Since $f(y_{\max})=0$, we can write
$$-f(y) = f(y_{\max}) - f(y) = \frac{1}{2}\log\left(\frac{y_{\max}}{y}\right)\geq \frac{y_{\max} - y}{2y_{\max}},$$
where is the last inequality, we apply that  $\log(x) \ge 1 - \frac{1}{x}$ if $x \ge 1$. Then
$$z_{\max}-z_{\min}  \leq   \int_{y_{\min}}^{y_{\max}} \frac{\sqrt{2y_{\max}}}{\sqrt{y_{\max} - y}} \, dy = 2\sqrt{2y_{\max}(y_{\max} - y_{\min})}<\infty.$$ 
This establishes that the domain of $z$ is bounded. Moreover,  as $z$ approaches the endpoints $z_0$ and $z_1$, 
the value of $y$ approaches $y_{\min}$. 

To prove the concavity of the graph $y = y(z)$, we compute its second derivative. Using the chain rule, we have 
$dy/dz = y'/z' = \cot\theta$. Then,
$$\frac{d^2y}{dz^2} = \frac{\frac{d}{ds}(\cot\theta)}{z'} =   -\frac{\theta'}{\sin^3\theta}.$$
Since $\theta \in (0,\pi)$, we have   $d^2y/dz^2 < 0$ if and only if   $\theta' > 0$. From the system \eqref{d1}, the numerator of $\theta'$ can be rewritten by completing 
 the square as follows:
 $$1+2y^2\cos^2\theta+(y^4-4y^2)\cos^4\theta = (1-y^2\cos^2\theta)^2 + 4y^2\cos^2\theta\sin^2\theta.$$
 Since $y > 0$ and $\sin\theta > 0$, it 
 follows that $\theta' > 0$.   This implies that $\gamma$ is strictly concave. This concludes the proof of items (3) and (4).

\end{proof}
 
\section{Invariant solitons with respect to the vector field $F_2=\partial_y-\frac{x}{2}\partial_z$}\label{s4}

In this section, we study  the second   class of solitons, namely those Killing vector field is   $F_2=\partial_y-\frac{x}{2}\partial_z$. Following the   approach established in the previous section, we analyze the   three canonical families of invariant surfaces. As we 
will demonstrate,   only vertical planes as valid soliton solutions, thus ruling out horizontal and helicoidal invariant $F_2$-solitons.

\begin{proposition}
An invariant surface $\Sigma$ in $\text{Nil}_3$ is an $F_2$-soliton if and only if one of the following conditions 
holds:
\begin{enumerate}
\item $\Sigma$ is invariant under vertical translations, in which case it is a vertical plane parametrized 
by
\begin{equation}\label{f2h}
\Psi(s, t) = \left( -\frac{1}{4}s + x_0, , \pm \frac{\sqrt{15}}{4}s + y_0, t \right), \quad x_0, y_0 \in \mathbb{R}.
\end{equation}
\item There are no $F_2$-solitons invariant under horizontal translations.
\item There are no $F_2$-solitons invariant 
under helicoidal motions.
\end{enumerate}
\end{proposition}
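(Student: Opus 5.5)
Write $F_2=E_2-xE_3$, so for a unit normal $N=(N_1,N_2,N_3)$ in the frame $\mathcal{B}$ we have $\langle N,F_2\rangle=N_2-xN_3$. The plan is to substitute the expressions of Proposition \ref{pr22} into \eqref{eq1} for each of the three invariant families, exactly as in the $F_1$ case.

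\emph{Vertical translations.} By \eqref{n1}, $N=(y',-x',0)$, so $\langle N,F_2\rangle=-x'$. Since $K=-1/4$ by \eqref{k1}, equation \eqref{eq1} becomes $-1/4=x'$. The arc-length condition then gives $y'=\pm\sqrt{15}/4$. Integrating yields \eqref{f2h}, a vertical plane. Conversely, every such plane satisfies the equation.

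\emph{Horizontal translations.} Use the parametrization \eqref{p2}. At a point $\Psi(s,t)$ the $x$-coordinate is $t$, and by \eqref{n2} we get
\[
\langle N,F_2\rangle=\frac{1}{\sqrt W}\bigl(-z'-t\,y'\bigr).
\]
The left side $K$ in \eqref{k2} depends only on $s$. Differentiating \eqref{eq1} in $t$ therefore forces $y'\equiv 0$. Then $z'=\pm1$, $W=1$, and $\langle N,F_2\rangle=\mp1$, while \eqref{k2} gives $K=-1/4$. The equation would become $-1/4=\pm1$, which is impossible. So no such soliton exists. The one point to watch is that $N$ is expressed in the frame $\mathcal B$ at the point $\Psi(s,t)$, so the $t$-dependence enters only through $x=t$.

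\emph{Helicoidal motions.} Here $x=r\cos t$. From \eqref{n3},
\[
\sqrt W\,\langle N,F_2\rangle=-r'\Bigl(c-\tfrac{r^2}{2}\Bigr)\cos t-rh'\sin t-r^2r'\cos t .
\]
Since $K$ in \eqref{k3} is independent of $t$, the coefficients of $\cos t$ and $\sin t$ must vanish. The $\sin t$ coefficient gives $rh'=0$. The $\cos t$ coefficient gives $r'(c+\tfrac{r^2}{2})=0$.

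If $r\equiv0$ we are on the axis and the surface is degenerate, so assume $r>0$; then $h'=0$ and $r'=\pm1$. The second condition then requires $c+r^2/2=0$, which can hold only at isolated values of $r$, contradicting $r'\neq0$. Hence no helicoidal $F_2$-soliton exists.

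Once the normal is written down, the argument is essentially a separation of variables in $t$. The main obstacle is bookkeeping: the $x$-coordinate of the point, $t$ or $r\cos t$, must be included in $\langle N,F_2\rangle$, unlike the $F_1$ case where $y$ entered instead.
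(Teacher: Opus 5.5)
Your proposal is correct and follows the paper's proof essentially line by line. In each of the three families you compute $\langle N,F_2\rangle=N_2-xN_3$: the vertical case gives $x'=-1/4$, and the horizontal and helicoidal cases are ruled out by separating the $t$-dependence (forcing $y'=0$, respectively $h'=0$ and $r'(c+r^2/2)=0$) and reaching a contradiction. Your explicit restriction to $r>0$ and your check of the converse for the vertical planes are minor points the paper leaves implicit.
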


\begin{proof}
With respect to the   left-invariant orthonormal frame $\mathcal{B}$, the Killing vector field $F_2$ is expressed as   $F_2 = E_2 - x E_3$. Thus, if $N = (N_1, N_2, N_3)$ denotes the unit normal vector of $\Sigma$ in this frame,  its inner product with $F_2$ is given by $\langle N, F_2 \rangle = N_2 - x N_3$. 

\begin{enumerate}
    \item For vertical translation invariant surfaces parametrized by \eqref{p1}, with $x'^2+y'^2=1$, equation \eqref{n1} yields 
     $\langle N, F_2 \rangle = -x'$. Since $K = -1/4$, the soliton equation \eqref{eq1} becomes  $x' = -1/4$. Substituting this into the arc-length condition $x'^2 + y'^2 = 1$, we obtain $y'^2 = \frac{15}{16}$, which yields $y(s) = \pm \frac{\sqrt{15}}{4}s + y_0$. This provides the desired parametrization.

    \item For horizontal translation invariant surfaces parametrized by \eqref{p2}, with $y'^2+z'^2=1$, we have $ \langle N, F_2 \rangle = \frac{-z' - ty'}{\sqrt{W}}$. Since the left-hand side of \eqref{eq1} depends solely on $s$,   the right-hand side   must be independent of $t$. This forces  $y' = 0$, and thus $y(s) = y_0$ is a constant function. From the arc-length condition $y'^2 + z'^2 = 1$, we deduce   $z' = \pm 1$.    Substituting this into \eqref{n2} and \eqref{k2},  we find $W=1$, $K=-1/4$, and $   \langle N, F_2 \rangle = \pm 1$. This yields a contradiction in the soliton equation \eqref{eq1}, proving  that  such solitons do not exist.

    \item For helicoidal surfaces parametrized by \eqref{p3}, with $y'^2+z'^2=1$, equation \eqref{n3} gives
    $$        \langle N, F_2 \rangle  =   -\frac{1}{\sqrt{W}} \left( r'\left(c + \frac{r^2}{2}\right)\cos t + rh'\sin t \right).$$
    Since the left-hand side of the soliton equation \eqref{eq1} depends only on $s$, we deduce $r'(c+\frac{r^2}{2})=h'=0$.  Therefore,  $h(s)=h_0$ is a constant.  The  arc-length condition $r'^2 + h'^2 = 1$ then implies that $r'^2 = 1$. However, since $ r'\left(c + \frac{r^2}{2}\right) = 0$, we must have $c + \frac{r^2}{2} = 0$. This means that $r(s)$ must be a constant function, so $r' = 0$, which   contradicts $r'^2 = 1$. Consequently, no helicoidal $F_2$-solitons exist.
\end{enumerate}
\end{proof}

\begin{remark}
We point out that the soliton \eqref{f2h} is the soliton   \eqref{f1h} via the map $\phi(x,y,z)=(-y,x,z)$. This is in accordance with the argument of Section \ref{s2} where the $\pi/2$-rotations map invariant surfaces under $F_2$-translations into invariant surfaces under $F_1$-translations, and with Proposition \ref{pr23}.
\end{remark}

\section{Invariant solitons with respect to the vector field $F_3=\partial_z$}\label{s5}

In this section, we investigate $F_3$-solitons.   Unlike the previous Killing vector fields $F_1$ and $F_2$ that imposed   geometric rigidity, the Killing vector field $F_3$  provides a rich family of solutions. We will show that while vertical translation solitons are excluded, both horizontal and helicoidal invariant surfaces produce   $F_3$-solitons characterized by   non-linear ordinary differential equations. In the following result, we assume that the invariant surfaces are parametrized as in Proposition \ref{pr21} under the conditions of Proposition \ref{pr22}.

 \begin{proposition}
An invariant surface $\Sigma$ in $\text{Nil}_3$ is an $F_3$-soliton if and only if one of the following conditions holds:
\begin{enumerate}
\item There are no $F_3$-solitons invariant under vertical translations.
    \item A horizontal translation invariant surface is an $F_3$-soliton if and only if $y(s)$ satisfies the ordinary differential equation:
    \begin{equation}\label{18}
       4 y y''+y^4 y'^4+y^2 \left(2 y'^2-4 y'^4\right)+1 = 4 y' W^{3/2},
    \end{equation}
    where $W = 1+ y^2y'^2$.
    \item A helicoidal  surface is an $F_3$-soliton if and only if $r(s)$ satisfies the ordinary differential equation:
    \begin{equation}\label{23}
        \begin{split}
        &  \left( 1 - c + \frac{r^2}{2} \right) \left(-r^3r''+ r^2r'^2(1 - r'^2)\left( c - \frac{r^2}{2} \right)  \right) \\
        &\quad - \left[ r'^2 \left( c - \frac{r^2}{2} \right) \left( \frac{c}{2} - \frac{r^2}{4} - 1 \right) - \frac{r^2}{2} \right]^2   = - rr' \sqrt{W} ,
        \end{split}
    \end{equation}
    where $W =  r^2 + r'^2\left(c-\frac{r^2}{2}\right)^2$.
    
    If the surface is rotational ($c=0$), the equation becomes
    \begin{equation*}
        \left( 1 + \frac{r^2}{2} \right)\left( r^3r'' + \frac{1}{2}r^4r'^2(1 - r'^2)  \right)    + \frac{r^4}{4}\left[ r'^2 \left( 1 + \frac{r^2}{4} \right) - 1 \right]^2   =  rr' \sqrt{W},
 \end{equation*}
    where $W  =r^2 + \frac{1}{4}r^4r'^2$.

\end{enumerate}
\end{proposition}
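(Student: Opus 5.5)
The plan is to follow the pattern of the $F_1$ and $F_2$ cases. Since $F_3=E_3$, for a unit normal $N=(N_1,N_2,N_3)$ in the frame $\mathcal{B}$ we have $\langle N,F_3\rangle=N_3$. The soliton equation \eqref{eq1} then reads $K=-N_3$. We substitute the expressions of Proposition \ref{pr22} for each of the three canonical families of Proposition \ref{pr21}.

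\emph{Vertical translations.} By \eqref{n1} we have $N_3=0$, while \eqref{k1} gives $K=-1/4$. Equation \eqref{eq1} would therefore force $-1/4=0$, which is absurd, so no such solitons exist.

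\emph{Horizontal translations.} By \eqref{n2}, with the normalization of Proposition \ref{pr22}, we have $N_3=y'/\sqrt{W}$, where $W=1+y^2y'^2$. Equating with \eqref{k2} gives
\[
-\frac{4yy''+y^4y'^4+y^2(2y'^2-4y'^4)+1}{4W^2}=-\frac{y'}{\sqrt W}.
\]
Multiplying by $-4W^2$ yields \eqref{18}. Both sides depend only on $s$, so no compatibility condition in $t$ arises. Every step is reversible, which gives the ``if and only if''.

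\emph{Helicoidal surfaces.} By \eqref{n3} we have $N_3=rr'/\sqrt W$, which is again independent of $t$. The soliton equation becomes: the bracketed numerator of \eqref{k3}, divided by $W$, equals $-rr'/\sqrt W$. Multiplying by $W$ gives \eqref{23}. For the rotational case we set $c=0$. Then $1-c+\frac{r^2}{2}=1+\frac{r^2}{2}$. The factor $c-\frac{r^2}{2}$ becomes $-\frac{r^2}{2}$, so the first product becomes $-(1+\frac{r^2}{2})\bigl(r^3r''+\frac12 r^4r'^2(1-r'^2)\bigr)$. Inside the square bracket, $r'^2(-\frac{r^2}{2})(-\frac{r^2}{4}-1)-\frac{r^2}{2}=\frac{r^2}{2}\bigl[r'^2(1+\frac{r^2}{4})-1\bigr]$, whose square is $\frac{r^4}{4}[\cdots]^2$. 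Multiplying the whole identity by $-1$ gives the displayed rotational equation, with $W=r^2+\frac14 r^4r'^2$.

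The argument is purely algebraic. The only delicate point is bookkeeping: the sign and orientation conventions for $N$ in \eqref{n2} and \eqref{n3} must be used consistently with the formulas for $K$. I expect the main obstacle to be the sign tracking in the $c=0$ simplification, which I would verify term by term.
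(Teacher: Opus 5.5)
Your argument is the paper's own: use $\langle N,F_3\rangle=N_3$ and substitute Proposition \ref{pr22} into \eqref{eq1}. The vertical case is fine. The horizontal case is fine too; you rightly take $W=1+y^2y'^2$, which is what makes $N$ in \eqref{n2} a unit vector, so the printed $W=\sqrt{1+y^2y'^2}$ there is a misprint. Your $c=0$ algebra is also correct.

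\textbf{The helicoidal step fails.} The paper's one-line proof fails in the same way, because both take the normalization of \eqref{k3} at face value. Call $B$ the large bracket in \eqref{k3}.
\begin{itemize}
\item In the proof of Proposition \ref{pr22}, each $h_{ij}$ carries the factor $W^{-1/2}$ coming from $N$. Hence $h_{11}h_{22}-h_{12}^2=B/W$, and so $K=(h_{11}h_{22}-h_{12}^2)/W=B/W^2$, not $B/W$. This is exactly the normalization of \eqref{k2}.
\item You can check this with the vertical cylinder $r\equiv r_0$, $h(s)=s$, for any $c$. Here $W=r_0^2$ and $B=-r_0^4/4$. Then $B/W=-r_0^2/4$ contradicts \eqref{k1}, while $B/W^2=-1/4$ agrees with it.
\item Consequently $K=-N_3=-rr'/\sqrt W$ is equivalent to $B=-rr'W^{3/2}$. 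In the rotational case the right-hand side should be $rr'W^{3/2}$, with $W=r^2+\frac14 r^4r'^2$.
\end{itemize}

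The printed equation \eqref{23} and the true soliton equation can hold simultaneously only where $rr'(W-1)=0$. Take initial data with $r_0>0$, $r_0'\in(0,1)$, $W\neq1$ and $1-c+r_0^2/2\neq0$. The corresponding local solution of \eqref{23} is then not a soliton. So item (3) as stated is not an equivalence, and no proof of it as written can exist. Your step ``multiplying by $W$ gives \eqref{23}'' only reproduces the misprint.

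The obstacle you anticipated, the signs at $c=0$, is harmless. The real one is the power of $W$. Comparing with your own horizontal computation, where you correctly obtained $W^{3/2}$, would have exposed it.
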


\begin{proof}
We evaluate the soliton equation \eqref{eq1} on the three families of invariant surfaces parametrized in  Proposition \ref{pr21}.  Note that with respect to $\mathcal{B}$, the Killing vector field is simply $F_3=(0,0,1)$, which implies that $\langle N,F_3\rangle=N_3$.  
\begin{enumerate}
    \item For vertical translation invariant surfaces, the normal   vector \eqref{n1}   yields $\langle N, E_3 \rangle = 0$. Since  $K = -1/4$, this yields a contradiction in \eqref{eq1}, meaning  no such solitons exist.
    \item For horizontal translation invariant surfaces, the third component of $N$ in \eqref{n2} is   $y'/\sqrt{W}$. Substituting this in \eqref{eq1}   yields    equation \eqref{18}
    \item  Equation \eqref{23} is straightforward from \eqref{n3} and \eqref{k3}.  
\end{enumerate}
\end{proof}
 
 In the following result, we give the main properties of the generating curves of invariant $F_3$-solitons under horizontal translations described in equation \eqref{18}.
 
\begin{theorem}
The generating curve $\gamma(s)=(y(s),z(s))$ of a horizontal invariant $F_3$-soliton  falls into one of the following two   cases:
\begin{enumerate}
    \item The profile curve can be expressed explicitly as a graph $z = z(y)$ given by
    \begin{equation}\label{17}
        z(y) = \pm \left( \frac{y}{2}\sqrt{15-y^2} + \frac{15}{2}\arcsin\left(\frac{y}{\sqrt{15-y^2}}\right) \right) + z_0,
    \end{equation}
    where $z_0$ is a constant of integration. See Fig. \ref{fig1}, right.
    
    \item  The function $y(s)$, where $s$ is the arc-length parameter,   satisfies the implicit first integral
    \begin{equation}\label{19}
        \frac{y'}{\sqrt{1+y^2y'^2}} + \frac{1}{4}\log  \left| \frac{4y'}{\sqrt{1+y^2y'^2}} - 1 \right| = \log  |y| + C, 
    \end{equation}
    where $C$ is an arbitrary constant of integration.
\end{enumerate}
\end{theorem}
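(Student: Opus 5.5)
The plan is to follow the reduction used in the proof of Theorem \ref{t32}, but with a variable adapted to the right-hand side of \eqref{18}. The natural choice is the third component of the unit normal from \eqref{n2},
$$q=\frac{y'}{\sqrt{W}}=\frac{y'}{\sqrt{1+y^2y'^2}},$$
since \eqref{18} is exactly $K=-q$. As in Theorem \ref{t32}, on any interval where $y'\neq 0$ I will write $y'=p(y)$, so that $y''=pp'$. With $u=y^2p^2$ we have $4ypp'=2u'/y-4u/y^2$. Multiplying \eqref{18} by $y^2$ then gives
$$2yu'-4u-4u^2+(1+u)^2y^2=4y^2p(1+u)^{3/2}.$$

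Next I express everything through $q$. The identities $q^2=p^2/(1+u)$ and $1-y^2q^2=1/(1+u)$ give $1+u=(1-y^2q^2)^{-1}$ and $p=q(1+u)^{1/2}$. Substituting these, and differentiating $1+u$ with respect to $y$, should collapse the equation above to a first-order separable ODE. Guided by the form of \eqref{19}, I expect it to be
$$4y\,q\,\frac{dq}{dy}=4q-1 .$$
This is the step I regard as the main obstacle. The cancellations in the algebra must be checked carefully, in particular the terms $-4u-4u^2$ against $(1+u)^2y^2$ and the power $(1+u)^{3/2}$. If the sign or the constants come out differently, I would adjust the variable (for instance take $-q$, or rescale) until the equation separates.

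Once the separable form is obtained, the two cases of the statement come from the dichotomy $4q-1\equiv 0$ or $4q-1\neq 0$.

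Case 1 ($q\equiv 1/4$). This is the singular, constant solution. It gives $16y'^2=1+y^2y'^2$, hence $y'^2=1/(16-y^2)$ and $z'^2=1-y'^2=(15-y^2)/(16-y^2)$. Therefore $(dz/dy)^2=15-y^2$, and integrating $\pm\sqrt{15-y^2}$ by the standard formula yields the explicit graph \eqref{17}.

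Case 2 ($4q-1\neq 0$). Separating variables gives
$$\frac{4q\,dq}{4q-1}=\Bigl(1+\frac{1}{4q-1}\Bigr)dq=\frac{dy}{y},$$
and integration produces $q+\tfrac14\log|4q-1|=\log|y|+C$, which is \eqref{19}. I will verify directly that differentiating \eqref{19} with respect to $s$ recovers \eqref{18}. That check lets the first integral hold globally in $s$, including across points where $y'=0$ and the substitution $p(y)$ is not available, by continuity and uniqueness of solutions.

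Finally, $y\neq 0$ is needed for the logarithm and for $p(y)$. This follows as in Theorem \ref{t32}: letting $y\to 0$ in \eqref{18} forces $1=4y'$ there, and this must be checked for consistency with the first integral in each case.
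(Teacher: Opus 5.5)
Your plan is essentially the paper's proof. The paper also sets $y'=p(y)$ and $u=y^2p^2$, passes through $v=1+u$ and $w=1/v$ to reach $w'-\frac{2}{y}w=\frac{y}{2}-\frac{2}{y}-2\varepsilon\sqrt{1-w}$, and then substitutes $w=1-y^2P^2$ with $P=q$. It obtains exactly your separable equation $4yq\,\frac{dq}{dy}=4q-1$; the algebra needs no sign or scaling adjustment, and your direct substitution works for either sign of $y$. It then uses the same split into $q\equiv\frac14$ and $q\neq\frac14$.

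Two small corrections are needed:
\begin{enumerate}
\item $y\neq0$ does not follow as in Theorem \ref{t32}. At $y=0$, equation \eqref{18} only forces $y'=\frac14$, and the Case~1 curve genuinely crosses $y=0$. So \eqref{19} should simply be asserted on arcs where $y\neq0$; the paper just assumes $y>0$.
\item The standard antiderivative of $\sqrt{15-y^2}$ gives the term $\frac{15}{2}\arcsin\bigl(y/\sqrt{15}\bigr)=\frac{15}{2}\arctan\bigl(y/\sqrt{15-y^2}\bigr)$. The $\arcsin\bigl(y/\sqrt{15-y^2}\bigr)$ printed in \eqref{17} is therefore a typo, which your computation will not reproduce.
\end{enumerate}
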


\begin{proof}
Consider the solution of the ODE \eqref{18}. We make the same changes of variables as in the proof of Theorem \ref{t32}, obtaining a differential equation for $w$:
\begin{equation}\label{w}
 w' - \frac{2}{y}w = \frac{y}{2} - \frac{2}{y} - 2\varepsilon\sqrt{1-w},
 \end{equation}
where $\varepsilon=\mbox{sgn}(y')$. To solve this equation, and inspired by the   variable   $w/y^2$ that appeared in the proof of Theorem \ref{t32}, let us introduce 
$$P(y) = \frac{p}{\sqrt{1+y^2p^2}}.$$
Then  $w = 1 - y^2P^2$. Differentiating with respect to $y$ yields $w' = -2yP^2 - 2y^2P\frac{dP}{dy}$. Furthermore, assuming $y>0$,  equation \eqref{w} becomes
$$ -2yP^2 - 2y^2P\frac{dP}{dy} - \frac{2}{y}\left(1 - y^2P^2\right) = \frac{y}{2} - \frac{2}{y} - 2yP, $$
or equivalently,  
\begin{equation}\label{p4}
 4yP\frac{dP}{dy} = 4P-1.
 \end{equation}
\begin{enumerate}
\item Case $P=1/4$.  This implies 
 $$ y' = \pm\frac{1}{\sqrt{16-y^2}},$$
  Since the curve is parametrized by arc-length, we have  
\begin{equation*}
    z' = \pm \sqrt{\frac{15-y^2}{16-y^2}}.
\end{equation*}
Instead of solving this system of ODEs, it is more convenient to regard $z$ as a function of $y$, $z=z(y)$. Then 
$$z'(y)=\frac{z'(s)}{y'(s)}=  \pm \sqrt{15-y^2}.$$
The solution of this ODE  is \eqref{17}.
\item Case $P\not=1/4$. By separation of variables in \eqref{p4}, we have 
\begin{equation}\label{pp}
\frac{4P}{4P-1}\, dp=\frac{1}{y}\, dy.
\end{equation}
Integrating, we arrive at 
$$ P + \frac{1}{4}\log |4P-1| = \log |y| + C, $$
where $C$ is the integration constant. Replacing $P$ with its   definition in terms of $y'$ recovers \eqref{19}. 
\end{enumerate}
\end{proof}

\begin{remark} The integral equation \eqref{19} can be expressed in terms of the Lambert function $W$. This function is defined as a branch of the inverse of the complex function $\sigma\mapsto \sigma e^\sigma$ \cite{wiki}. If $W=W(w)$, then $W$ satisfies the ODE $w(1+W)W'=W$. This can be obtained from \eqref{pp} after the change of variables $w=y^4$, and $W=4P-1$.  
\end{remark}

  \begin{figure}[h!t]
\centering
\includegraphics[width=0.18\linewidth]{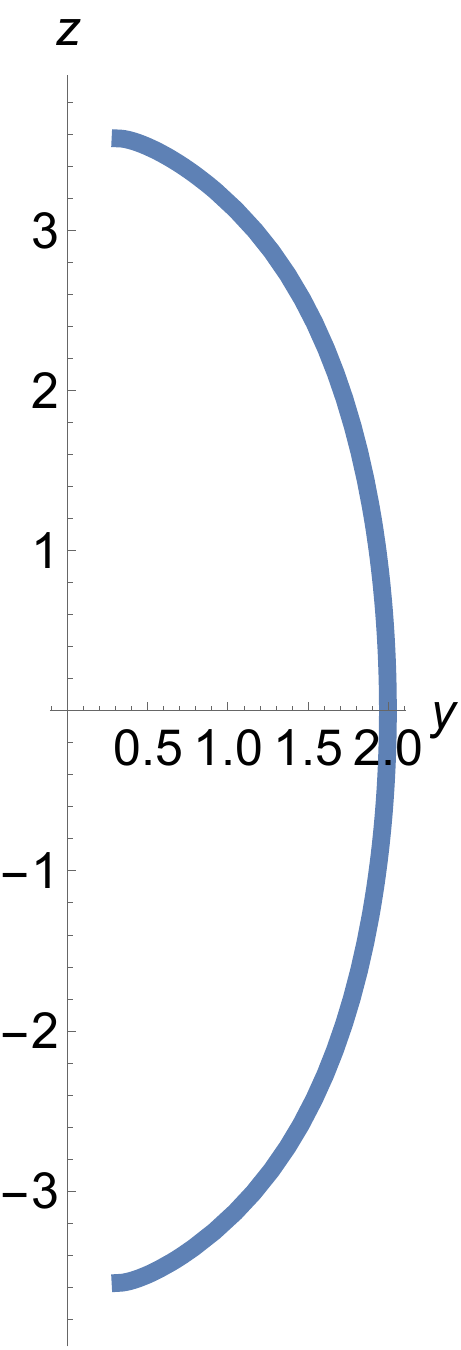}\qquad \includegraphics[width=0.45\linewidth]{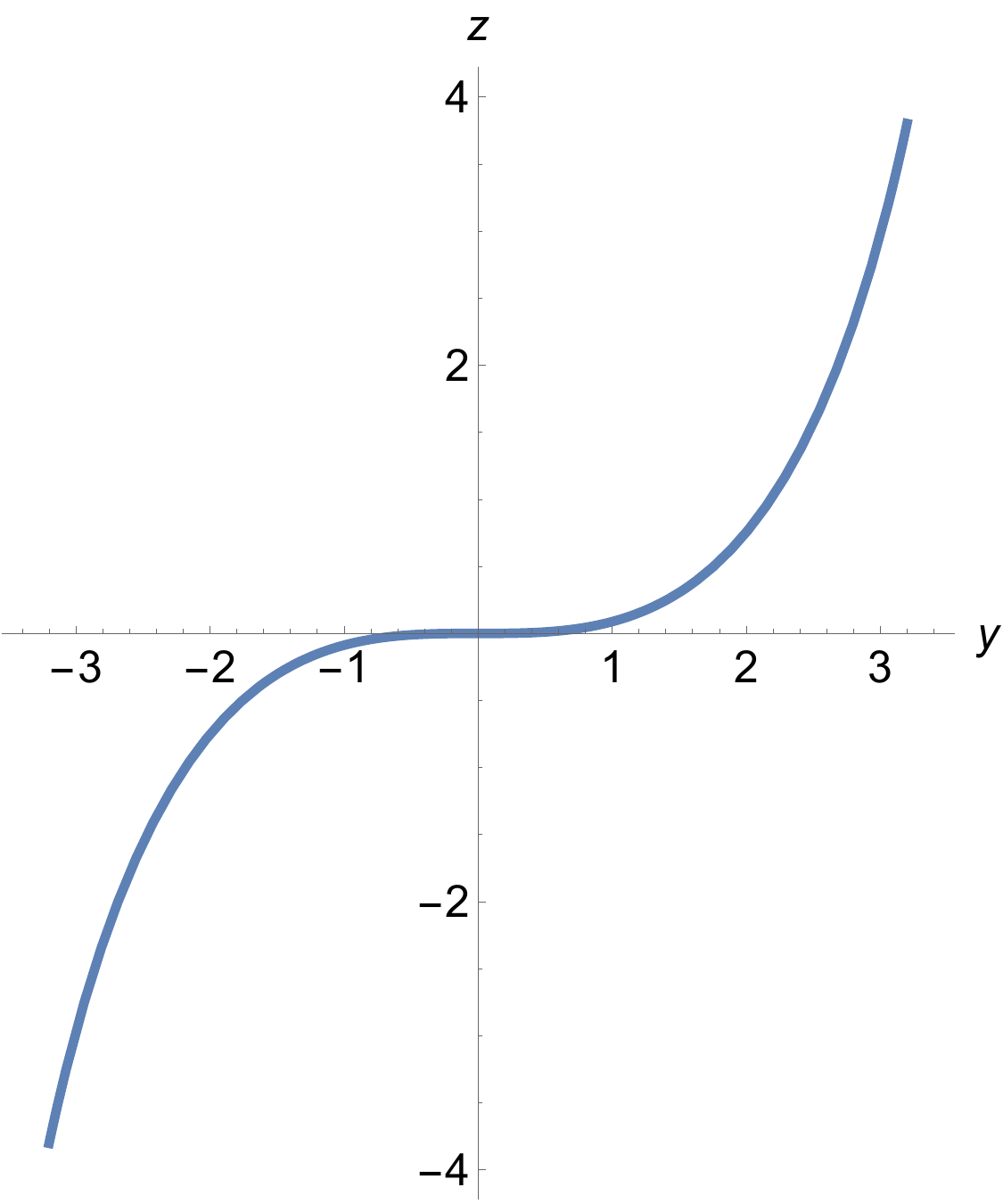} 
\caption{Left: horizontal invariant$F_1$-soliton governed by the ODE \eqref{16}.   Right:  horizontal invariant $F_3$-soliton governed by the ODE \eqref{17}.  }
\label{fig1}
\end{figure}

\section{Invariant solitons with respect to the vector field $F_4=-y\partial_x+x\partial_y$}\label{s6}

In the last section, we classify the invariant solitons corresponding to the  Killing vector field $F_4 = -y\partial_x + x\partial_y$. This vector 
field imposes restrictions to the existence of invariant solitons, such as ruling out  horizontal translation solitons.  In the case of rotational solitons,   they must be flat  ($K=0$), which is proved in the following result.
 
\begin{proposition}\label{pr61}
An invariant surface $\Sigma$ in $\text{Nil}_3$ is an $F_4$-soliton if and only if one of the following conditions holds:
\begin{enumerate}
  \item $\Sigma$ is invariant under vertical translations, in which case, $\Sigma$ is parametrized by  
  \begin{equation}\label{f4v}
  \Psi(s,t)=\sqrt{C - \frac{1}{2}s}\, (\cos\theta(s),\sin\theta(s),t),
  \end{equation}
  where 
  $$  \theta(s)= \mp \left( \sqrt{16C - 8s - 1} - \arctan\left(\sqrt{16C - 8s - 1}\right) \right) + \theta_0,$$
 with $C, \theta_0 \in \mathbb{R}$. The solution is defined on   $s \le 2C - 1/8$. In particular,   the squared Euclidean distance of $\Sigma$ to the $z$-axis decreases linearly as it approaches the circle $R = 1/4$: see Fig. \ref{fig2}, left.

    \item There are no $F_4$-solitons invariant under horizontal translations.
    
    \item $\Sigma$ is invariant under helicoidal motions, in which case,  its generating curve $\gamma(s)=(r(s),0,h(s))$, with $r'^2+h'^2=1$, satisfies the ODE
    \begin{equation}\label{f4r0}
        \begin{split}
        & \left( 1 - c + \frac{r^2}{2} \right)\left( -r^3r'' + r^2r'^2(1 - r'^2) ( c - \frac{r^2}{2}  ) \right) \\
        &\quad - \left[ r'^2 \left( c - \frac{r^2}{2} \right) \left( \frac{c}{2} - \frac{r^2}{4} - 1 \right) - \frac{r^2}{2} \right]^2   = c r r' \sqrt{W} ,
        \end{split}
    \end{equation}
    where $W =  r^2 + r'^2\left(c-\frac{r^2}{2}\right)^2$. If the surface is   rotational ($c=0$), then $\Sigma$ is flat ($K=0$) and characterized by  
    \begin{equation}\label{f4r}
     \left( 1 + \frac{r^2}{2} \right)\left(r^3r'' +\frac{1}{2}r^4r'^2(1 - r'^2) \right) - \frac{r^4}{4}\left[ r'^2 \left( 1 + \frac{r^2}{4} \right) - 1 \right]^2 = 0.
    \end{equation}  
    \end{enumerate}
\end{proposition}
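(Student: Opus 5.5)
The plan is to evaluate the soliton equation \eqref{eq1} directly on each of the three parametrizations of Proposition \ref{pr21}, using the unit normals and Gauss curvatures of Proposition \ref{pr22}. The only new ingredient is the pairing $\langle N,F_4\rangle$, computed in the frame $\mathcal{B}$ with
$$F_4=-yE_1+xE_2-\tfrac{x^2+y^2}{2}E_3,$$
where $(x,y)$ are the first two coordinates of the point of $\Sigma$.

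\textbf{Vertical translations.} Here $K=-1/4$ by \eqref{k1} and $N=(y',-x',0)$ by \eqref{n1}. This gives $\langle N,F_4\rangle=-(xx'+yy')=-\tfrac12 (x^2+y^2)'$, so \eqref{eq1} becomes $(x^2+y^2)'=-\tfrac12$, hence $R^2:=x^2+y^2=C-\tfrac{s}{2}$.

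To recover the curve I write $(x,y)=R(\cos\theta,\sin\theta)$. The arc-length condition reads $R'^2+R^2\theta'^2=1$. From $R'=-1/(4R)$ I get
$$\theta'^2=\frac{16R^2-1}{16R^4}.$$
I then substitute $u=\sqrt{16C-8s-1}$, so that $16R^2=u^2+1$ and $ds=-\tfrac{u}{4}\,du$. This turns the equation into $d\theta=\mp\frac{u^2}{1+u^2}\,du$, which integrates to the stated formula $\theta=\mp(u-\arctan u)+\theta_0$. Reality of $u$ forces $s\le 2C-1/8$, i.e.\ $R\ge 1/4$, which yields the linear decay of $R^2$ toward the circle $R=1/4$.

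\textbf{Horizontal translations.} A point of \eqref{p2} has $x=t$, so $F_4=(-y,t,-\tfrac{t^2+y^2}{2})$. With $N$ as in \eqref{n2},
$$\sqrt{W}\langle N,F_4\rangle=y^2y'-tz'-\tfrac{y'}{2}(t^2+y^2).$$
Since $K$ and $W$ depend only on $s$, the coefficients of $t$ and $t^2$ must vanish. This gives $z'=0$ and $y'=0$, contradicting $y'^2+z'^2=1$, so no such solitons exist.

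\textbf{Helicoidal motions.} At $\Psi(s,t)$ we have $F_4=(-r\sin t,\,r\cos t,\,-r^2/2)$. Pairing with \eqref{n3}, the $\sin t\cos t$ terms cancel and $\sin^2t+\cos^2t=1$, giving
$$\sqrt{W}\langle N,F_4\rangle=-rr'\left(c-\tfrac{r^2}{2}\right)-\tfrac{r^3r'}{2}=-c\,rr'.$$
Thus \eqref{eq1} reads $K=c\,rr'/\sqrt{W}$. Multiplying by $W$ and using \eqref{k3} gives \eqref{f4r0}. For $c=0$ the right-hand side vanishes, so $K=0$, and setting $c=0$ in \eqref{k3} (and changing the overall sign) gives \eqref{f4r}.

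\textbf{Main obstacle.} I expect the main difficulty to be the careful bookkeeping of the $t$-dependent cancellation in the helicoidal pairing, together with the explicit integration for $\theta(s)$ in the vertical case. The substitution $u=\sqrt{16C-8s-1}$ is what makes that integral elementary.
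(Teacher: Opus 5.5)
Your proposal follows the paper's proof step by step, with a bit more detail. You use the same three evaluations of $\langle N,F_4\rangle$ in the frame $\mathcal{B}$, and the same vanishing of the $t$- and $t^2$-coefficients in the horizontal case. The helicoidal pairing cancels to $-c\,rr'/\sqrt{W}$ exactly as in the paper. The vertical case uses the same polar-coordinate integration: your substitution $u=\sqrt{16C-8s-1}$ makes explicit the integral the paper only states, and your bound $R\ge 1/4$ is the paper's Cauchy--Schwarz inequality, read as $|R'|\le 1$. However, one step is asserted without being carried out, and it does not come out as you claim.

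Setting $c=0$ in \eqref{k3}, the numerator becomes
\[
-\Bigl(1+\frac{r^2}{2}\Bigr)\Bigl(r^3r''+\frac12 r^4r'^2(1-r'^2)\Bigr)-\frac{r^4}{4}\Bigl[r'^2\Bigl(1+\frac{r^2}{4}\Bigr)-1\Bigr]^2 .
\]
After changing the overall sign, the flatness condition therefore has $+\frac{r^4}{4}[\,\cdots]^2$, not the minus sign printed in \eqref{f4r}. The printed sign is a typo in the statement. The plus version is the one that matches the rotational $F_3$ equation, and the function $G$ in the next theorem, where $r''=-G<0$. You should derive and record the corrected equation rather than assert that the substitution reproduces \eqref{f4r}.

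Relatedly, the prefactor $1/W$ in \eqref{k3} is itself off. Each $h_{ij}$ in the proof of Proposition \ref{pr22} carries a factor $W^{-1/2}$, and $K=\det(h_{ij})/W$, so the prefactor should be $1/W^2$. As a check, take the plane $z=0$ (that is, $c=0$, $r=s$). With $1/W^2$ you get $K=-r^4/\bigl(64(1+r^2/4)^2\bigr)\to -1/4$, which a direct computation confirms; with $1/W$, $K$ is unbounded. Consequently, clearing denominators correctly gives $c\,rr'W^{3/2}$, not $c\,rr'\sqrt{W}$, on the right-hand side of \eqref{f4r0}. This defect is inherited from Proposition \ref{pr22}, is shared by the paper's proof, and is harmless in the rotational case $c=0$.
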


\begin{proof}
By writing  $F_4 $ in terms of  $\mathcal{B}$,  the inner product $\langle N,F_4\rangle$ is  given by 
\begin{equation}\label{F4}
    \langle N, F_4 \rangle = -y N_1 + x N_2 - \frac{x^2+y^2}{2} N_3.
\end{equation}

\begin{enumerate}
    \item Assume $\Sigma$  parametrized by \eqref{p1}, with $x'^2+y'^2=1$. From \eqref{n1} and  \eqref{F4}, we find $\langle N, F_4 \rangle = -y y' - x x'$. Since   $K = -1/4$ by \eqref{k1}, the soliton equation \eqref{eq1} becomes $x x' + y y' = -1/4$.     Integrating, we find     $x(s)^2 + y(s)^2 = -\frac{1}{2}s + C$, where $C$ is a constant. Furthermore, applying the Cauchy-Schwarz inequality to the vectors $(x, y)$ and $(x', y')$, and taking into account the arc-length condition $x'^2 + y'^2 = 1$, we obtain
    \begin{equation*}
        \frac{1}{16} = (x x' + y y')^2 \le \left(x^2 + y^2\right)\left(x'^2 + y'^2\right) = x^2 + y^2=-\frac12s+C.
    \end{equation*}
 This  establishes the maximal domain for the parameter $s$.   If we express $\gamma$ in polar coordinates $(\rho(s),\theta(s))$,  then we find  $\rho(s) = \sqrt{C - \frac{1}{2}s}$.   The arc-length condition $x'^2 + y'^2 = 1$ is written  in polar coordinates as $\rho'^2 + \rho^2 \theta'^2 = 1$.  This implies 
$$\theta'(s) = \pm \frac{\sqrt{1-\rho'^2}}{\rho}=\pm \frac{\sqrt{16 C-8 s-1}}{2 (s-2 C)}.$$
Integrating, we obtain the expression for $\theta$ given in   item (1).   
    
    \item For horizontal translation-invariant surfaces parametrized by \eqref{p2}, with $y'^2+z'^2=1$, and thanks to \eqref{n2}, equation \eqref{F4} becomes
        \begin{equation*}
        \langle N, F_4 \rangle =  \frac{1}{\sqrt{W}}\left( \frac{y^2 y'}{2} - t z' - \frac{t^2}{2}y' \right).
    \end{equation*}
  Since the left-hand side of the soliton equation \eqref{eq1} does not depend on $t$, we deduce   $z'=0$ and $y'=0$. This contradicts 
   the arc-length condition $y'^2 + z'^2 = 1$. Thus, no such solitons exist.
    
    \item For helicoidal surfaces parametrized by \eqref{p3}, with $r'^2+h'^2=1$, the right-hand side of \eqref{eq1} is obtained by using \eqref{n3} and  \eqref{F4}, yielding 
    \begin{equation*}
        \langle N, F_4 \rangle =   -\frac{c r r'}{\sqrt{W}}.
    \end{equation*}
  Utilizing   \eqref{k3}, and simplifying via $h'^2=1-r'^2$ and $h'h''=-r'r''$,   we arrive   at the differential equation \eqref{f4r0}. If $c=0$, then $\langle N, F_4 \rangle = 0$, which   implies   $K=0$ because of \eqref{eq1}.
\end{enumerate}
\end{proof}

The equation in item (3) of Proposition \ref{pr61} is difficult to study. However, the case of rotational solitons ($c=0$) is more manageable because  $K=0$. The following result gives the   geometric properties of these surfaces, demonstrating the existence of rotational solitons that intersects the generating axis at two cusps, or orthogonally, or not intersect the rotation axis at all (see Fig. \ref{fig2}, right).

\begin{theorem} 
Let $\Sigma_{r_0}$ denote a rotational $F_4$-soliton whose profile curve is   $\gamma_{r_0}(s) = (r(s),0, h(s))$ parameterized by arc-length and satisfying the initial conditions $r(0) = r_0>0$, $h(0)=0$ and $\theta(0) = \pi/2$.  Then $\gamma$ is a graph over a bounded interval of the $z$-axis. Moreover, there exists a critical radius $r_c > 0$ such that the   the family $\Sigma_{r_0}$ exhibits the following properties: 
\begin{enumerate}
    \item If $r_0 < r_c$, then  $\gamma_{r_0}$  intersects the rotation axis with   two conical singularities (or cusps) at the poles.
        \item If $r_0 =r_c$, then  $\gamma_{r_0}$  intersects the rotation axis orthogonally and the corresponding rotational surface is a closed surface which is $C^1$ at the poles.  
\item If $r_0 > r_c$, then  $\gamma_{r_0}$ does not intersect the rotation axis and    reaches   horizontal tangent vectors at the endpoints of $\gamma$.  
\end{enumerate}
\end{theorem}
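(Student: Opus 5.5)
The plan is to follow the proof of Theorem \ref{t32} for the rotational case $c=0$: find a first integral of \eqref{f4r}, and then read the three behaviours off the level set through $(r_0,r'=0)$.

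\textbf{Phase plane and symmetry.} Write $r'=\cos\theta$ and $h'=\sin\theta$. With $c=0$, the coefficients in the proof of Proposition \ref{pr22} become
\[
h_{22}=\frac{r^2\sin\theta\,(1+\frac{r^2}{2})}{\sqrt W},\qquad
h_{11}=\frac{r\theta'-\frac{r^2}{2}\sin\theta\cos^2\theta}{\sqrt W}.
\]
Since $K=0$, we have $h_{11}h_{22}=h_{12}^2\ge 0$. This gives an explicit first-order system
\[
r'=\cos\theta,\qquad h'=\sin\theta,\qquad \theta'=\Phi(r,\theta),
\]
with
\[
\Phi(r,\theta)=\frac{r}{2}\sin\theta\cos^2\theta+\frac{h_{12}^2W}{r^3\sin\theta\,(1+r^2/2)}.
\]
This is the analogue of \eqref{d1}, and it shows that $\theta'$ has a fixed sign while $\sin\theta\ne0$, so $\theta$ stays in $(0,\pi)$. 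The initial data $r(0)=r_0$, $h(0)=0$, $\theta(0)=\pi/2$ are invariant under the reflection
\[
(\bar r(s),\bar h(s),\bar\theta(s))=(r(-s),-h(-s),\pi-\theta(-s)),
\]
and uniqueness then gives that $\gamma_{r_0}$ is symmetric about $h=0$. Because $\theta'$ has a fixed sign, $h$ is strictly monotone, so $\gamma$ is a graph $r=r(h)$ of one sign of concavity. Every critical point of $r$ is then a maximum, so $r_0=r_{\max}$ and $r$ decreases on both sides.

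\textbf{First integral.} Set $p(r)=r'$ and $u=p^2$, and use $r''=\frac12\,du/dr$. Then \eqref{f4r} becomes a first-order ODE in $u(r)$ that is quadratic in $u$. I will look for a substitution of the type $v=1/(\text{rational function of } u \text{ and } r)$, modeled on $w=1/(1+y^2y'^2)$ in Theorem \ref{t32} and suggested by $W=r^2+\frac14 r^4u$, that turns it into a Riccati or linear equation solvable by an integrating factor. This should give a relation $G(r,r'^2)=C$ with $C=G(r_0,0)$. If no closed form appears, the argument below only uses the phase curve in the $(r,u)$-strip $0\le u\le1$, so I would work directly with the integral curve $u=u(r;r_0)$ starting at $u(r_0)=0$ and follow it for decreasing $r$.

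\textbf{Trichotomy.} Along the branch, $u$ increases from $0$ as $r$ decreases. Two things can happen.
\begin{enumerate}
\item The branch hits $u=1$, i.e.\ $\theta\in\{0,\pi\}$, at some $r_1>0$. There the tangent is horizontal, and the denominator $\sin\theta$ in $\Phi$ shows that the solution cannot be continued. This is case (3).
\item The branch stays below $u=1$ and reaches $r=0$ with a limit $u_0=\lim_{r\to0}u\in(0,1]$. If $u_0<1$, the curve meets the axis at a nonzero angle with the horizontal, and rotation produces a conical singularity. If $u_0=1$, it meets the axis orthogonally and the surface closes up $C^1$.
\end{enumerate}
Boundedness of the $h$-interval follows as in Theorem \ref{t32}:
\[
\int \sqrt{\frac{1-u}{u}}\,dr<\infty,
\]
since $u$ vanishes only simply at $r_0$, by the computation of $r''$ there.

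\textbf{Critical radius (main obstacle).} I expect the hard step to be proving that the transition is governed by a single value $r_c$. To do this I would show that the branches $u(\cdot\,;r_0)$ are ordered monotonically in $r_0$, using comparison of solutions of the first-order ODE, or the explicit form of $G$ if available. I would then define
\[
r_c=\sup\{r_0:\ \text{the branch reaches } r=0 \text{ with } u_0<1\}.
\]
Small $r_0$ should give $u_0<1$: for $r\to0$ the terms $r^4$ dominate, and the explicit integral can be checked there. Large $r_0$ should make $u$ reach $1$ at positive $r$. Continuous dependence on initial data then gives $u_0(r_c)=1$, which is the orthogonal case, and strict monotonicity gives uniqueness of $r_c$.
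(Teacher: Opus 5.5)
\textbf{Verdict: there is a genuine gap.} The substance of the theorem is that both extreme regimes occur, so that $r_c\in(0,\infty)$, and your proposal asserts this (``should'') without proving it.

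\textbf{What is correct.} Your set-up is close to the paper's. Your $\Phi$ equals the paper's $G(r,\theta)/\sin\theta$, and the symmetry and concave-graph arguments are the same. Passing to $u=r'^2$ as a function of $r$ gives the Riccati equation
\[
\frac{du}{dr}=2r''=-ru(1-u)-\frac{r}{2+r^2}\Bigl(\bigl(1+\tfrac{r^2}{4}\bigr)u-1\Bigr)^2 ,
\]
which is the paper's relation $du/dr=-2D(r,u)$. Your comparison step is an improvement on the paper. For $r_0<r_0'$ one has $u(r_0;r_0')>0=u(r_0;r_0)$, so uniqueness keeps the branches ordered. The cusp parameters then form a down-set, the horizontal-tangent parameters an up-set, and the threshold is unique; the paper's appeal to continuous dependence leaves this implicit.

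\textbf{The gap.} You never show that either extreme regime occurs, so nothing prevents $r_c=0$ or $r_c=+\infty$.
\begin{itemize}
\item Small $r_0$ is easy but unargued, and the remark that the $r^4$ terms dominate is not a proof. For $r\le 1$ and $0\le u\le 1$ one has $|du/dr|\le 3r/4$, hence $u\le 3r_0^2/8<1$ all the way to the axis.
\item Large $r_0$ is the substantive step, and your proposal has nothing in its place. The paper bounds $D(r,u)$ below by $c_1/r$ when $u\le 8/r^2$ and by $c_2r^3u^2$ beyond. This yields $r\ge r_0-c_3/r_0>0$ up to the moment $u$ reaches $1$.
\end{itemize}

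\textbf{How to close it.} The first integral you were looking for exists, and your guess for the substitution is the right one. The function $w=1/(1+\frac14r^2r'^2)=r^2/W$ satisfies the linear equation $\bigl((w-1)/r^2\bigr)'=\frac{r}{4(2+r^2)}$. With $r'=0$ at $r=r_0$ this gives
\[
\frac{1}{1+\frac14r^2r'^2}=1-\frac{r^2}{8}\log\frac{2+r_0^2}{2+r^2}.
\]
Hence along any branch that reaches the axis, $r'^2\to\frac12\log\bigl(1+\frac{r_0^2}{2}\bigr)$, which forces $r_0\le\sqrt{2(e^2-1)}$. Conversely, for such $r_0$ one checks from the formula that $(4+r^2)\log\frac{2+r_0^2}{2+r^2}<8$ on $(0,r_0]$, that is, $r'^2<1$ there, so the branch does reach the axis. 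For larger $r_0$ the branch must instead hit $u=1$ at some $r_1>0$. This proves all three cases and gives the explicit value $r_c=\sqrt{2(e^2-1)}$.
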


\begin{proof}
The arc-length condition $r'^2+h'^2=1$ implies the existence of a smooth function $\theta$ such that 
 $r' = \cos\theta$ and $h' = \sin\theta$.   Then, together \eqref{f4r}, the generating curve $\gamma$ is solution of the system 
 \begin{equation}\label{s33}
 \left\{
 \begin{split}
 r'&=\cos\theta,\\
 h'&=\sin\theta,\\
 \theta'&=  \frac{G(r,\theta)}{\sin\theta},
 \end{split}\right.
 \end{equation}
where
\begin{equation*}
G(r, \theta) = \frac{r}{2}\cos^2\theta\sin^2\theta + \frac{r}{4\left(1+\frac{r^2}{2}\right)}\left( \frac{r^2}{4}\cos^2\theta - \sin^2\theta \right)^2.
\end{equation*}
Since the function $z$ does not appear in equation \eqref{f4r}, a vertical translation in $\nil$ does not change the geometry of the surface. Thus, in the system \eqref{s33} we can choose as initial conditions,  $h(0)=0$ and $\theta(0)=\pi/2$  without loss   
of generality.  This means that the initial hypothesis in the theorem on the initial conditions covers all possible situations.  An argument similar to the one in the proof of Theorem \ref{t32} proves that $\gamma$ is symmetric about the horizontal line of equation $z=0$ in the $yz$-plane.

The constraint $r'^2\leq 1$ and because $\sin\theta\not=0$ implies that $\theta\in (0,\pi)$. Thus $r'\not=0$, proving tha $\gamma$ is a graph on the $z$-axis and writing $r=r(z)$, with $z=h$. Differentiating $r'(s)$, we have $r'' = -\theta'\sin\theta = -G(r,\theta)<0$   for all $r>0$ and $\theta \in (0, \pi)$. It follows that $r''(s) < 0$. Thus,  the initial radius $r_0$ is indeed the strict global maximum 
of the curve. In addition, $r''(s)=-\theta'/\sin^3\theta<0$, which proves that $\gamma$ is a global concave graph on the $z$-axis.   The   maximal domain of solutions of \eqref{s33} is determined  by when the function $r(s)$ attains the value $0$ or the function $\theta(s)$   attains the values $0$ or $\pi$.  

The function $G$ has the symmetry property $G(r, \pi-\theta) = G(r, \theta)$. Moreover, the derivative  of $r$ with respect to $\theta$ is
\begin{equation}\label{32}
\frac{dr}{d\theta} =\frac{r'}{\theta'}= \frac{\cos\theta\sin\theta}{G(r, \theta)}.
\end{equation}
Due to the symmetry of $G$, we focus on  $\theta \in (0, \pi/2]$. Integrating \eqref{32},  we obtain
\begin{equation}\label{33}
  r(\theta) -r_0= \int^{\theta}_{\pi/2} \frac{\cos\theta\sin\theta}{G(r(\theta), \theta)} \, d\theta.
\end{equation}

We now study this integral in terms of   $r_0$:
\begin{enumerate}
\item  We prove that $\gamma$ does not reach $r=0$ if $r_0$ is sufficiently large. The change of variables $u = \cos^2\theta$ in \eqref{33} yields 
\begin{equation}\label{35}
r_0 - r(\theta) = \frac{1}{2}\int_0^u \frac{1}{D(r(u), u)} \, du,
\end{equation}
where $u\in [0,1]$ and the denominator is given by
\begin{equation}\label{dd}
D(r, u) = \frac{r}{2}u(1-u) + \frac{r}{4\left(1+\frac{r^2}{2}\right)}\left( \left(\frac{r^2}{4}+1\right)u - 1 \right)^2.
\end{equation}

To   evaluate the integral \eqref{35}, we proceed   to show that   $r(u) \ge r_0/2$ holds for all $u\in [0,1]$. The argument is by contradiction. Suppose that this claim is false. Since $r(\pi/2)=r_0$, by continuity, there must exist a first point $u^*\in (0,1)$ where $r(u^*)=r_0/2$, meaning $r(u)\geq r_0/2$ holds for all $u\in[0,u^*]$. Within this interval,   we bound $D(r,u)$ from below by analyzing two subintervals for $u$.
\begin{enumerate}
\item Assume $u \in [0, 8/r^2]$. If $r$ is large, we have 
$$ 
 \frac{r}{4(1+r^2/2)}  = \frac{1}{2r} + \mathcal{O}(r^{-3}).$$
 Using    $u \le 8/r^2$, we have 
$$ \left(\frac{r^2}{4}+1\right)u  = \frac{r^2}{4}u + \mathcal{O}(r^{-2}) \le 2 + \mathcal{O}(r^{-2}).$$
Since we are assuming $r(u) \ge r_0/2$ on the interval $[0, u^*]$, the constants in the $\mathcal{O}(r^{-2})$ and 
$\mathcal{O}(r^{-3})$ terms are uniformly bounded independent of $u$ and $r_0$ (for sufficiently large $r_0$).  
Multiplying these asymptotic expressions and factoring out $1/r$ yields, we arrive at  
\begin{equation*}
D(r,u) = \frac{1}{r} \left[ \frac{r^2 u}{2}(1-u) + \frac{1}{2}\left(\frac{r^2 u}{4} - 1\right)^2 \right] + \mathcal{O}(r^{-3}).
\end{equation*}
Since the term in the bracket is a positive continuous function of the bounded quantity $r^2 u \in [0,8]$, it is uniformly bounded from below by a constant, yielding $D(r,u) \ge \frac{c_1}{r}$, with $c_1\in\r$.

\item Assume $u \in [8/r^2, 1]$. In this interval, the first term of $D(r,u)$ in \eqref{dd} is non-negative. Moreover, since $u \ge 8/r^2$, we have 
\begin{equation*}
\left( \left(\frac{r^2}{4}+1\right)u - 1 \right)^2 > \left( \frac{r^2}{4}u - 1 \right)^2 \ge \left( \frac{r^2}{8}u \right)^2 = \frac{r^4}{64}u^2.
\end{equation*}
For large $r$, we have $\frac{r}{4(1+r^2/2)} \ge \frac{c}{r}$ for some uniform constant $c > 0$. Dropping the first non-negative term of $D$, we obtain $ D(r,u) \ge  c_2 r^3 u^2$, for some positive constant $c_2$.
\end{enumerate}

Now, applying these bounds by integrating \eqref{35}   from $0$ to $u^*$, and noting that the value $8/r(u)^2$ lies   between $8/r_0^2$ and $32/r_0^2$, we have:
\begin{equation*}
\begin{split}
r_0 - r(u^*) &\le \frac{1}{2} \int_0^{32/r_0^2} \frac{r(u)}{c_1} \, du + \frac{1}{2} \int_{8/r_0^2}^1 \frac{1}{c_2 r(u)^3 u^2} \, du \\
&\le \frac{1}{2} \int_0^{32/r_0^2} \frac{r_0}{c_1} \, du + \frac{1}{2} \int_{8/r_0^2}^1 \frac{8}{c_2 r_0^3 u^2} \, du \\
&\le \frac{16}{c_1 r_0} + \frac{1}{2 c_2 r_0} := \frac{c_3}{r_0},
\end{split}
\end{equation*}
for some constant $c_3 > 0$. For a sufficiently large $r_0$, we  have $c_3/r_0 < r_0/2$, which   implies that $r(u^*)\geq r_0-c_3/r_0>r_0/2$. This contradicts our assumption that $r(u^*)=r_0/2$. Consequently, no such point $u^*$ can exist, meaning that the bound holds for all $u\in [0,1]$ and therefore, 
$$r(0)=r(\theta=0)\geq r_0-\frac{c_3}{r_0}>0.$$
This proves that $\gamma$ is bounded away from the $z$-axis if $r_0$ is sufficiently large.  

At   $\theta = 0$, we have $r'(0)=1$, which shows that $\gamma$ ends with a horizontal tangent vector. The argument in the interval $[\pi/2,\pi)$ is a consequence of the symmetry of $\gamma$ with respect to the line $z=0$.

\item Now, let $r\to 0$. A standard Taylor expansion yields $\left(1+\frac{r^2}{2}\right)^{-1} = 1 + \mathcal{O}(r^2)$, which   implies
$$G(r, \theta) = \frac{r}{2}\cos^2\theta\sin^2\theta + \frac{r}{4}\sin^4\theta + \mathcal{O}(r^3) = \frac{r\sin^2\theta}{4}(1+\cos^2\theta) + \mathcal{O}(r^3).$$
Consequently, for $r$ sufficiently small, we can bound $G(r,\theta)$, and equation \eqref{32} yields the differential inequality
\begin{equation*}
\frac{dr}{d\theta} \geq \frac{c_4\cos\theta}{r\sin\theta(1+\cos^2\theta)},
\end{equation*}
for some uniform constant $0 < c_4 < 2$. Separating variables and integrating   from $\theta=\pi/2$ yields 
\begin{equation*}
r(\theta)^2 \leq r_0^2 + c_4\log \left( \frac{\sin^2\theta}{1+\cos^2\theta} \right).
\end{equation*}
As $\theta\to 0$, since the logarithmic term diverges to $-\infty$,   the upper bound for $r(\theta)^2$ in this inequality becomes negative. Therefore,  there must exist a minimum   angle $\theta_{\min} > 0$ such that $r(\theta_{\min})=0$.  This implies that   $\gamma$ attains the $z$-axis before   reaching a horizontal tangent vector. Moreover, since $\theta_{min}\not=0$, $r'(\theta_{min})\not=1$, which means that the intersection of $\gamma$ with the $z$-axis is not orthogonal, that is, it is a cusp. 

The existence of the   critical  value $r_c$ follows  from the continuous dependence of solutions on the 
initial parameter $r_0$. For the value $r_0=r_c$, it  must satisfy $r \to 0$ and $\theta \to 0$ simultaneously, which means the profile curve meets the vertical rotation axis, and at this intersection, $\gamma$ is orthogonal to the $z$-axis.

\end{enumerate}
\end{proof}
 
  \begin{figure}[h!t]
\centering
\includegraphics[width=.35\linewidth]{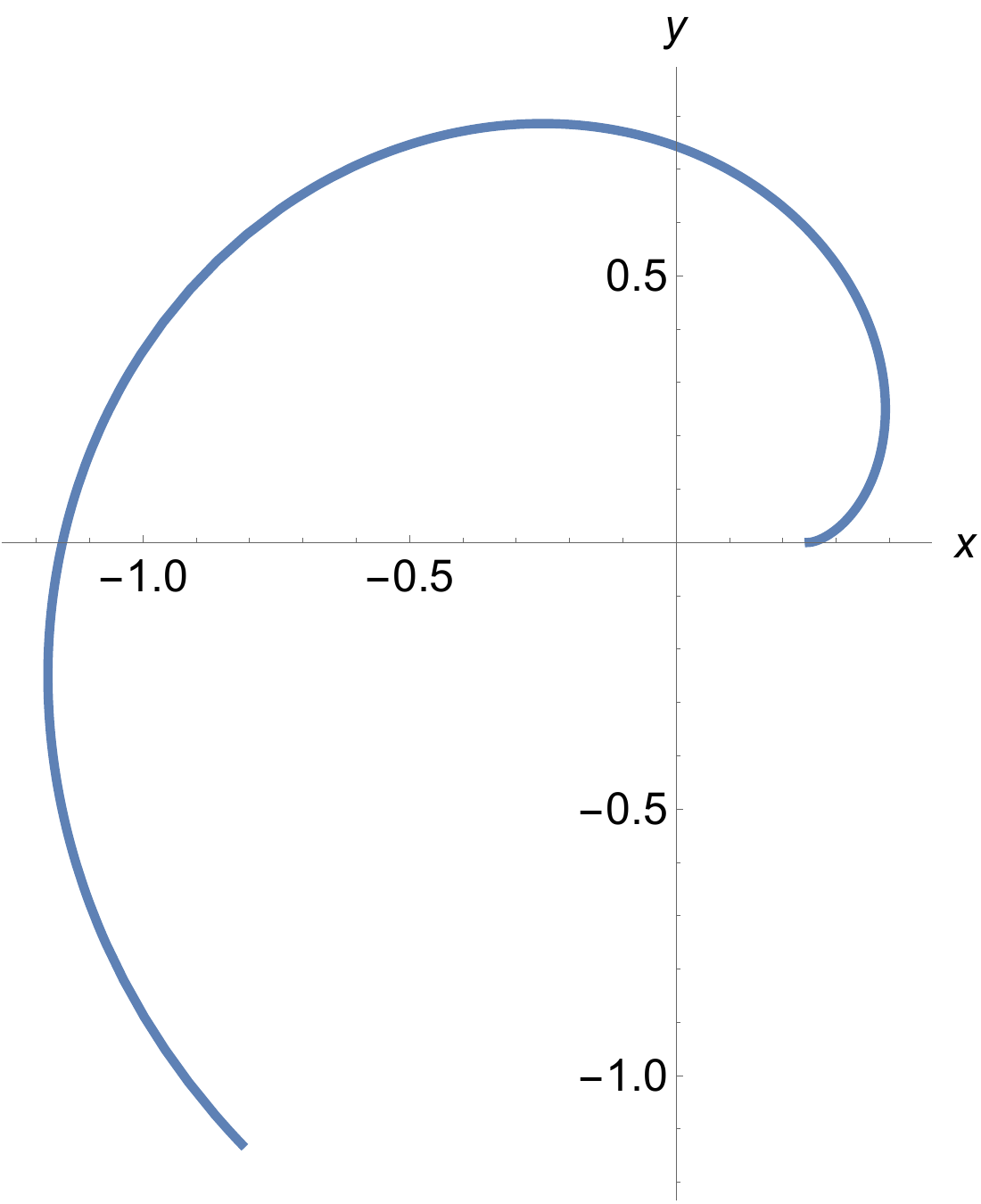}\qquad \qquad\qquad\includegraphics[width=0.25\linewidth]{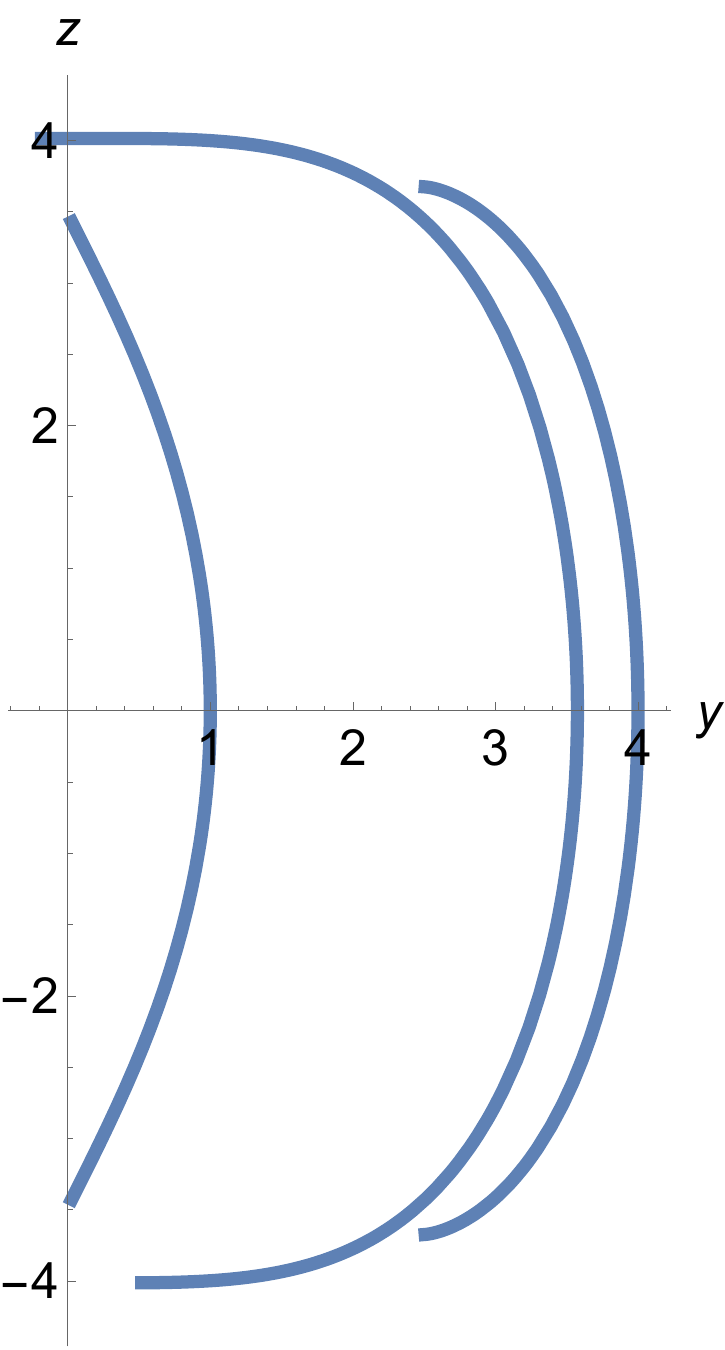} 
\caption{Left: vertical invariant $F_4$-soliton parametrized by    \eqref{f4v}.   Right:  rotational $F_4$-solitons governed by the ODE \eqref{f4r}, intersecting and not intersecting the rotation axis.  }
\label{fig2}
\end{figure}

 \section*{Acknowledgements}
Rafael Belli thanks the Department of Geometry and Topology of the University of Granada for their hospitality, where this work was carried out. This study was financed, in part, by the São Paulo Research Foundation (FAPESP), Brasil.
 Process Number 2025/09871-1. Rafael L\'opez has been partially supported
  by MINECO/MICINN/ FEDER grant
no. PID2023-150727NB-I00, and by the ``Mar\'{\i}a de Maeztu'' Excellence
Unit IMAG, reference CEX2020-001105- M, funded by MCINN/AEI/10.13039/
501100011033/ CEX2020-001105-M.

\section*{Ethics declarations}

Conflict of interest. The authors declare that they have no conflict of interest. No datasets were generated or analysed during the current study.



\end{document}